\newcommand{\R}{\mathbb{R}}
\newcommand{\be}{\begin{equation}}
\newcommand{\ee}{\end{equation}}
\newcommand{\ba}{\begin{aligned}}
\newcommand{\ea}{\end{aligned}}
\newcommand{\eps}{\epsilon}
\newcommand{\mpt}{\mathcal{P}_2(\R^d)}
\newcommand{\mpp}{\mathcal{P}_p(\R^d)}
\newcommand{\mpo}{\mathcal P(\mathcal O)}
\newcommand{\mo}{\mathcal O}
\newcommand{\mm}{\mathcal P(M)}
\newtheorem{Theorem}{Theorem}[section]
\newtheorem{Rem}[Theorem]{Remark}
\newtheorem{Def}[Theorem]{Definition}
\newtheorem{Prop}[Theorem]{Proposition}
\newtheorem{Ex}[Theorem]{Example}
\newtheorem{Def/Prop}[Theorem]{Definition/Proposition}
\newcommand{\triplenorm}[1]{{\left\vert\kern-0.25ex\left\vert\kern-0.25ex\left\vert #1 
    \right\vert\kern-0.25ex\right\vert\kern-0.25ex\right\vert}}
\title{The tangent space to the Wasserstein space: parallel transport and other applications}
\begin{document}





\author{Charles Bertucci \textsuperscript{a}}
\address{ \textsuperscript{a} CEREMADE, CNRS, Universit\'e Paris Dauphine-PSL, UMR 7534, 75016 Paris, France.}

\maketitle

\begin{abstract}
We propose a new notion of the \emph{formal} tangent space to the Wasserstein space $\mathcal P(X)$ at a given measure. Modulo an integrability condition, we say that this tangent space is made of functions over $X$ which are valued in the probability measures over the tangent bundle to X. This generalization of previous concepts of tangent spaces allows us to define appropriate notions of parallel transport, $\mathcal C^{1,\alpha}$ regularity over $\mathcal P(X)$ and translation of a curve over $\mathcal P(X)$.
\end{abstract}


\setcounter{tocdepth}{1}

\tableofcontents
\section*{Introduction}
We introduce a new notion of tangent space to the space of probability measures over different finite dimensional metric sets, when it is equipped with a Wasserstein metric (i.e. a Wasserstein space). This notion is more general than the one previously considered in the literature in the sense that it contains strictly more elements. Our notion basically sees elements in the tangent space as measured valued functions, instead of simply functions, in the same spirit as the famous relaxations of either Young or Kantorovich.\\

We then proceed by using this point of view to define a notion of parallel transport on the space of probability measures (Section \ref{sec:para}). Later, we build on this notion of parallel transport to define $\mathcal C^{1,\alpha}$ spaces of real valued functions over the space of probability measures equipped with Wasserstein metrics (Section \ref{sec:c1a}). We finally use a similar point of view to explain how we can translate curves and how it is helpful in optimal control (Section \ref{sec:trans}).\\

We feel the need to insist upon the fact that, despite the maybe poor use of the differential geometry vocabulary in what follows (tangent space, parallel transport,...), we do not claim that the Wasserstein space is a smooth Riemannian manifold onto which such concepts are well defined. Nonetheless, we claim that adopting such a geometric point of view leads to the definition of tools that are useful in the analysis of several problems.\\

More detailed bibliographical comments are given in the core of the paper, but we now present briefly the context. Since the seminal work of Otto \citep{otto} and the influential book of Ambrosio, Gigli and Savar\'e \citep{ags}, Wasserstein spaces are often thought of equipped with a formal Riemannian structure. The tangent space to the Wasserstein space $\mathcal P(X)$ is widely taken as the closure of gradients of smooth functions over the metric space $X$, closure which is taken for some metric which depends on the probability measure at which we are taking the tangent space. This point of view is mostly induced by an Eulerian description of flows on $\mathcal P(X)$. It has been extremely fruitful. It also allowed to define parallel transports along flows which can be written in an Eulerian way with sufficient regularity \citep{gigli,gigli2}. We provide here a more Lagrangian point of view which allows us to define parallel transport quite generally, along Lagrangian flows. We then present some natural consequences of this (Lagrangian) notion of parallel transport. Our point of view is clearly in line with the lift of Lions on the space of random variables introduced in \citep{lions}, which we generalize to more involved settings, see Section \ref{sec:lift}.

\section{Notation and standard results}
Let $d \geq 1$ be an integer. For a topological space $O$, $\mathcal P(O)$ stands for the set of Borel probability measures on $O$. For $p \in [1,\infty)$, we consider the space $\mpp:= \{m \in \mathcal P(\R^d)| \int_{\R^d}|x|^pm(dx) < \infty\}$ equipped with the $p$-Wasserstein distance $W_p$ defined by
$$
W_p(µ,\nu) = \left(\inf_{\gamma \in \Pi(µ,\nu)}\int_{\R^{2d}}|x-y|^p\gamma(dx,dy)\right)^{\frac1p},
$$
where $\Pi(µ,\nu)$ is the set of couplings between $µ$ and $\nu$, i.e. the set of measures $\gamma \in \mathcal P(\R^{2d})$ such that $(\pi_1)_\#\gamma =µ$ and $(\pi_2)_\#\gamma= \nu$, where, for $x,y \in \R^d$, $\pi_1(x,y) = x$ and $\pi_2(x,y) = y$, and $f_\#m$ denotes the image of the measure $m$ by the map $f$. The maps $\pi_1$ and $\pi_2$ shall be used more generally on any couple as the natural projection maps. The set $(\mpp,W_p)$ is a complete separable metric space.

Below, $\mathcal O$ shall be the closure of a smooth open bounded subset of $\R^d$. For $ p \in [1,\infty)$, $(\mathcal P(O),W_p)$ is a complete separable compact metric space.

For $p \in (1,\infty)$, we denote $p' = p/(p-1)$. We then define $\mathcal P_{pp'}(\R^{d}\times \R^d)= \{µ \in \mathcal P(\R^{2d}) | (\pi_1)_\# µ \in \mpp, (\pi_2)_\#µ \in \mathcal P_{p'}(\R^d)\}$. This set can be equipped with a sort of $(p,p')$-Wasserstein distance (see Section 10.3 in \citep{ags}), that we do not detail here.

Below, $M$ shall denote a smooth bounded manifold without boundary. Given $x \in M$, $T_xM$ is the tangent space to $M$ at $x$ and $TM:= \cup_{x \in M}\{x\}\times T_xM$ is the tangent bundle of $M$, which shall also be identified sometimes with $\cup_{x \in M}T_xM$. Given a smooth curve $\theta : [0,1] \to M$, and $p_0 \in T_{\theta(0)}M$, the parallel transport of $p_0$ along $\theta$ is characterized as the unique curve $p:[0,1] \to TM$ such that $\nabla_{\theta'(t)}p(t) = 0$ together with $p(0) = p_0$. The set $(\mm,W_p)$ is a complete separable compact metric space for any $p \in [1,\infty)$, where $W_p$ is defined by replacing $|x-y|$ by the geodesic distance between $x$ and $y$ in the definition above.

\section{Tangent space to $\mpp$}
\subsection{Main definitions}
The study of evolution equations on $\mpp$ lead to the need of working with the tangent space to $\mpp$ at some point $µ \in \mpp$. An early and influential work in this direction is \citep{otto} which was in fact concerned with $L^1$ spaces. Following \citep{ags} Section 8.4, several works including namely \citep{gangbo,carmona} have then argue that the tangent space to $(\mpp,W_p)$ at $µ \in \mpp$ is
 \be\label{bad}\overline {\nabla_x C^{\infty}(\R^d,\R)}^{L^{p'}((\R^d,µ),\R^d)}.\ee

Here, we present a different point of view and consider instead the tangent space to $\mpp$ at $µ \in \mpp$ to be given by
\be\label{tangent}
T_µ(\mpp) := \left\{ \psi : \R^d \to \mathcal P(\R^d), \text{ measurable } \bigg| \int_{\R^{2d}}|z|^{p'}\psi_x(dz)µ(dx) < \infty \right\}.
\ee
By disintegration, it is equivalent to considering the set of probability measures $\gamma \in \mathcal P_{pp'}(\R^{2d})$ such that $(\pi_1)_\#\gamma = µ$ and we shall sometimes say with a slight abuse of notation that some $\gamma(dx,dz) = µ(dx)\psi_x(dz)$ belongs to $T_µ(\mpp)$ if $\psi$ does. Pursuing in this direction, we define $T\mpp:= \cup_{µ \in \mpp}\{µ\}\times T_µ\mpp$ which will be identified with $\mathcal P_{pp'}(\R^{2d})$.

Of course, for all elements $\phi$ in the tangent space defined in \eqref{bad}, we have an associated elements in $T_µ(\mpp)$ by considering the map $\psi : x \to \delta_{\phi(x)}$, such elements shall be called deterministic.

We provide in the next sections applications which we believe justify precisely this notion of tangent space. We finish this section by giving some heuristics to support our definition of tangent space.

\subsection{Heuristics}\label{sec:heuristics}
\subsubsection{What can be found in the literature}
A similar notion of tangent space was in fact considered in \citep{giglitesi} and Section 12.4 in \citep{ags} (the so-called velocity plans), with the restriction to maps $\psi$ satisfying some sort of optimality, arising from the fact that only couplings arising from geodesics of optimal transport were considered. In regard to their definition, our main contribution is simply to avoid this restriction of $T_µ(\mpt)$. Furthermore, we point out the work \citep{giglitesi,aussedat} which, despite not identifying the same tangent space as us, studies it and provides interesting results concerning the decomposition of $T_µ(\mpt)$ into \eqref{bad} and what could be interpreted as its orthogonal space.\\

Somehow, the comparison result we established in \citep{bertucci2024stochastic} is a natural application of the choice of tangent space. Indeed, by considering extended super-differentials (see Section 10.3 in \citep{ags}), we showed that comparison principles are immediate for Hamilton-Jacobi equation in $\mathcal P (\mathbb{T}^d)$. Hence, we allowed to seek for elements in super-differentials which are in $T\mpp$, and not only in some set defined as in \eqref{bad}. This sort of ideas were later also used in \citep{ceccherini1,ceccherini2}.\\

More generally, the notion of extended super-differentials of \citep{ags} basically seeks to define what does it mean for an element of $T_µ(\mpp)$ to be in the super-differential of a function $U:\mpp \to \R$ at the point $µ$.\\

Furthermore, recent developments \citep{cavagnari1,cavagnari2,cavagnari3} in the precise characterization of not so smooth dynamics on $\mpp$ have made a systematic use of so-called multi valued probability vector fields, which are somehow vector fields on the tangent space we defined above. 

\subsubsection{Young measures}
The main issue when dealing with \eqref{bad} is that the latter space behaves quite badly with $µ$. The next example is a good illustration of this phenomenon.
\begin{Ex}\label{ex:1}
Let $d = 1$, $µ = \delta_0$ and consider for $\eps > 0$, $µ_\eps:= \mathcal N(0,\eps)$, the Gaussian distribution of mean $0$ and variance $\eps$. Clearly $W_2(µ,µ_\eps) \to 0$ as $\eps \to 0$. For any $\eps > 0$, the function $f (x) = \text{sign}(x) \text{ if } x \ne 0$, is in $ L^2((\R,µ_\eps),\R)$. However, the statement $f \in L^2((\R,µ),\R)$ is far from being clear. More generally $L^2((\R,µ),\R)$ has not much to do with $L^2((\R,µ_\eps),\R)$, even when $\eps$ is small but non-zero.
\end{Ex}
Hence, it seems that when dealing with \eqref{bad}, we loose a lot of information at some point compared to what can happen in a close neighborhood. In our opinion, it is reminiscent of an old problem faced in calculus of variations which has been solved by Young \citep{young}. He introduced what are now called Young measures, which allow to capture some oscillatory, random or complex behavior of sequences of functions by considering objects similar to the ones of \eqref{tangent}. Usually in Young measures, the reference measure $µ$ is fixed and we are simply taking limits of functions. Here, we claim that if we allow $µ$ to vary also, then we naturally end up with $T_µ(\mpp)$ if we want to keep all this information.

Typical problems in Young measures are to characterize the limit of functions $f_n: [0,1]\to \R$ defines as the almost everywhere derivative of $g_n$, a piecewise affine function which takes value $0$ on $\frac{k}{2^n}$ if $k$ is even, and value $1$ if $k$ is odd, and which is smooth outside of these points. Typically, Young measures allow to state that, in some sense, $f_n$ converges toward the measured valued map $\psi:[0,1] \to \mathcal P(\R)$ which is constant and takes value $\frac12(\delta_{-1} + \delta_1)$. 

In Example \ref{ex:1}, in some sense, it is natural to say that the limit version of $f \in L^2((\R,µ_\eps),\R)$ is to say that the function defined only on $\{0\}$ by $\big(0 \to \frac12(\delta_{-1} + \delta_1)\big)$ is an element of $ T_µ(\mathcal P(\R))$.

The point of view of Young measures can be interpreted as the fact that a relaxation of a functional space usually leads to considering measured value functions. This is quite in the spirit of the powerful Kantorovich relaxation of optimal transport, in fact it is an omnipresent idea in optimization, analysis or calculus of variations that we feel deserved to be highlighted here.

\subsubsection{Sheaf structure of the tangent spaces}\label{sec:sheaf}
When trying to equip $(\mpt,W_2)$ with the tangent space of \eqref{bad}, it is clear that we are quite far from obtaining the structure of a tangent bundle. Indeed, the nature of the candidate tangent space varies too much with $µ$ and makes any sort of local trivialization impossible.

However, the identification of $T_µ(\mpt)$ with a subset of $\mathcal P(\R^{2d})$ that we proposed hints a sheaf structure for this tangent space, that we establish in the next proposition.
\begin{Prop}
The functor $\mathcal F$ which associates to any open set $U \subset (\mpt,W_2)$ $\mathcal F(U) := \{\varphi: U \to \mathcal P_2(\R^{2d}) | \varphi \text{ is continuous }, (\pi_1)_\# \circ \varphi = Id\}$ defines a sheaf on $\mpt$ when associated to the standard restriction morphisms. Furthermore, the stalk $\mathcal F_µ$ of $\mathcal F$ over some $µ \in \mpt$ is given by $\{\gamma \in \mathcal P_2(\R^{2d}) | (\pi_1)_\# \gamma = µ\}$, that is by $T_µ(\mpt)$.
\end{Prop}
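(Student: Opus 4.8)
The plan is to verify the two sheaf axioms directly, using the elementary fact that continuity, the restriction condition, and the projection constraint $(\pi_1)_\# \circ \varphi = \mathrm{Id}$ are all \emph{local} conditions. First I would check that $\mathcal F$ is a presheaf: for open sets $V \subset U$, the restriction morphism $\rho_{UV} : \mathcal F(U) \to \mathcal F(V)$ sending $\varphi \mapsto \varphi|_V$ is well defined (the restriction of a continuous map to an open subset is continuous, and the identity $(\pi_1)_\# \circ \varphi = \mathrm{Id}$ clearly survives restriction), and these morphisms satisfy $\rho_{UU} = \mathrm{Id}$ and the cocycle condition $\rho_{VW} \circ \rho_{UV} = \rho_{UW}$. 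This is immediate from associativity of function restriction.

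Next I would establish the sheaf axioms proper. Given an open cover $U = \bigcup_{i} U_i$: for the \emph{separation} (locality) axiom, if $\varphi, \varphi' \in \mathcal F(U)$ satisfy $\varphi|_{U_i} = \varphi'|_{U_i}$ for all $i$, then since the $U_i$ cover $U$ and two functions agreeing on every set of a cover agree everywhere, $\varphi = \varphi'$. For the \emph{gluing} axiom, given a compatible family $\varphi_i \in \mathcal F(U_i)$ with $\varphi_i|_{U_i \cap U_j} = \varphi_j|_{U_i \cap U_j}$, I would define $\varphi : U \to \mathcal P_2(\R^{2d})$ pointwise by $\varphi(�) := \varphi_i(�)$ whenever $� \in U_i$; the compatibility condition guarantees this is unambiguous. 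The content to check is that the glued $\varphi$ lies in $\mathcal F(U)$: continuity follows because $\varphi$ agrees with the continuous map $\varphi_i$ on the open neighborhood $U_i$ of each of its points, so continuity is inherited locally; and $(\pi_1)_\# \circ \varphi = \mathrm{Id}$ holds because this identity is checked pointwise and each $\varphi_i$ satisfies it on $U_i$.

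For the stalk computation, recall that $\mathcal F_� := \varinjlim_{U \ni �} \mathcal F(U)$, the direct limit over open neighborhoods of $�$ ordered by reverse inclusion. I would exhibit a bijection between $\mathcal F_�$ and $\{\gamma \in \mathcal P_2(\R^{2d}) \mid (\pi_1)_\# \gamma = �\}$ via the evaluation map $[\varphi] \mapsto \varphi(�)$. This map is well defined on germs since two sections agreeing on a neighborhood of $�$ in particular take the same value at $�$, and it lands in the target set because every $\varphi \in \mathcal F(U)$ satisfies $(\pi_1)_\# \varphi(�) = \mathrm{Id}(�) = �$. For surjectivity I would, given $\gamma$ with $(\pi_1)_\# \gamma = �$, produce a continuous section on some neighborhood taking value $\gamma$ at $�$. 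For injectivity I would argue that if two germs agree at $�$ they already agree on a neighborhood.

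The main obstacle is the surjectivity of the stalk evaluation map: I must construct, for an arbitrary $\gamma$ with marginal $�$, a continuous lift $\varphi$ defined on a neighborhood of $�$ with $\varphi(�) = \gamma$ and $(\pi_1)_\# \varphi(\nu) = \nu$ everywhere. The difficulty is that prescribing the first marginal to vary as $\nu$ while keeping the section continuous in $W_2$ requires transporting the fixed measure $\gamma$ along a transport map or plan between $�$ and nearby $\nu$; I would rely on a measurable/continuous selection of optimal (or near-optimal) transport plans, or more simply use a composition-with-optimal-map construction near $�$, exploiting that the optimal transport depends continuously on the target in $W_2$ to guarantee continuity of the resulting section. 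Injectivity is comparatively routine once one notes that the value at $�$ together with the projection constraint and continuity pins down a germ, though strictly the germ agreement requires a local argument rather than merely equality at the single point $�$, so this is the second point deserving care.
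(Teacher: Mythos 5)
Your verification of the presheaf structure and of the two sheaf axioms is correct, and it is exactly the content of the paper's own (one-line) proof: continuity and the marginal constraint $(\pi_1)_\#\circ\varphi=\mathrm{Id}$ are local conditions, so separation and gluing are automatic. That part matches the paper.

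The gap is in the stalk computation, and it is twofold. First, your plan is to show that evaluation $[\varphi]\mapsto\varphi(\mu)$ is a \emph{bijection} from the stalk (germs) onto $\{\gamma\in\mathcal P_2(\R^{2d})\mid(\pi_1)_\#\gamma=\mu\}$, and you flag injectivity as "routine" modulo care. In fact injectivity is false, not merely delicate: two local sections can take the same value at $\mu$ while having distinct germs. Concretely, let $e_1$ be a fixed unit vector and set $\varphi(\nu):=(\mathrm{Id},0)_\#\nu$ (every point coupled with $0\in\R^d$) and $\varphi'(\nu):=(S_\nu)_\#\nu$ where $S_\nu(x):=(x,\,W_2(\mu,\nu)\,e_1)$. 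Both are continuous sections with first marginal $\nu$, they coincide at $\nu=\mu$, yet they differ for every $\nu\ne\mu$, so their germs at $\mu$ are distinct while their evaluations agree. Hence, with the standard germ definition of the stalk that you adopt, evaluation is a surjection onto $T_\mu(\mpt)$ but never a bijection; the identification claimed in the proposition can only be understood through this natural surjection (which is also the only reading consistent with the paper's remark that continuity of $(\pi_1)_\#$ "yields the structure of the stalk"). Second, even the surjectivity half of your argument rests on the assertion that optimal transport plans or maps depend continuously on the target measure, which is false in the generality you need: optimal maps need not exist when $\mu$ is not absolutely continuous, optimal plans are not unique, and the optimal-plan correspondence admits no continuous selection in general. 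Producing a continuous local section passing through an arbitrary prescribed $\gamma$ genuinely requires a construction: for $d=1$ one can write $\gamma=\mathcal L(X,Z)$, choose a uniform variable $U$ with $X=F_\mu^{-1}(U)$ almost surely, and take $\nu\mapsto\mathcal L\bigl(F_\nu^{-1}(U),Z\bigr)$, which is continuous because $\nu\mapsto F_\nu^{-1}(U)$ is an isometry into $L^2$; in dimension $d\geq 2$ no such canonical coupling exists and a different (non-optimal-transport) argument is needed. So your overall architecture (evaluation map, surjectivity as the key point) is the right way to make the paper's terse claim precise, but the bijectivity you aim for is strictly stronger than what is true, and the one continuity fact you invoke for surjectivity does not hold.
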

\begin{proof}
The result is trivial as all the properties of a sheaf are automatically verified. Furthermore, the map $(\pi_1)_\# \cdot$ is continuous on $\mathcal P_2(\R^{2d})$, which naturally yields the structure of the stalk.
\end{proof}

\subsubsection{Lagrangian vs Eulerian dynamics}
In finite dimension, the tangent space to a manifold $M$ at a point $x \in M$ can be thought of as being the set of speeds $\dot \theta(0)$ of smooth curves $\theta$ such that $\theta(0) = x$. In $\mpp$, restricting our attention to geodesics and having in mind the Benamou-Brenier dynamic reformulation of optimal transport \citep{benamou2000computational}, we know that, formally geodesics $(m_t)_{t \in [-1,1]}$ are given as solution of continuity equations
$$
\partial_t m + \text{div}_x(\nabla_x \phi(t,x)m) = 0 \text{ in } (-1,1)\times \R^d,
$$ 
for some real valued function $\phi: [-1,1]\times \R^d \to \R$. Hence, the space \eqref{bad}, which thus inherits the Eulerian formulation of geodesics on $\mpp$, is the closure of possible speeds $\nabla_x \phi(0,\cdot)$.\\

A more Lagrangian approach consists in looking at dynamics $(m_t)_{t \in [-1,1]}$ expressed as the law of processes $(X_t)_{t \in [-1,1]}$, on a standard probabilistic space $(\Omega, \mathcal A,\mathbb P)$. At a Lagrangian level, smoothness implies that $X_t(\omega)$ is a smooth function of $t$, for almost every $\omega$. In this context, the speed of the curve is the collection $(\dot X_t(\omega))_{\omega \in \Omega}$. Keeping track of all the speeds then amounts to being interested in the set of pairs $(X_0(\omega),\dot X_0(\omega))_{\omega \in \Omega}$, and the fact that we are only interested in the law of such objects leads us to consider $\gamma := \mathcal L(X_0,\dot X_0) \in \mathcal P(\R^{2d})$. We expand on this in Sections \ref{sec:lift} and \ref{sec:curves} below.

\subsubsection{Particles approximation}
In order to tackle problems on $\mpp$, approximations of measures by combinations of $N$ Dirac masses can be considered before passing to the limit $N \to \infty$. At the level $N < \infty$, we want to approximate $\mpp$ with $(\R^d)^N$ equipped with an appropriate $\ell^p$ topology. For $x \in (\R^d)^N$, the tangent space $T_x((\R^d)^N)$ is simply given by $(\R^d)^N$. Let $(x_N)_{N \geq 1}$ be a sequence such that $x_N \in (\R^d)^N$ and $\lim_{N \to \infty} \frac1N \delta_{x_N^i} = µ \in \mpp$ and $(y_N)_{N\geq 1}$ be such that $y_N \in T_{x_N}(\R^d)^N$ for all $N\geq 1$. We then say that $(y_N)$ is going to approximate an element $\psi \in T_µ(\mpp)$ if $\lim_{N \to \infty} \frac1N \delta_{(x_N^i,y_N^i)} = µ(dx)\psi_x(dz)$. We then observe the difference with eventual deterministic tangent spaces such as \eqref{bad} as the fact that there is no restriction of the type $x_N^i= x_N^j\Rightarrow y_N^i = y_N^j$.

\section{Parallel transport on $\mpt$}\label{sec:para}
The first application of our concept of tangent space is to define a notion of parallel transport on $\mpp$. The notion of parallel transport usually requires three main concepts such as: tangent spaces (or more generally vector fields), a notion of smooth curves, and a notion of derivative along a smooth curve $\theta$. A parallel transport is then defined as a curve valued in the tangent space over the curve $\theta$ whose derivative in the direction that follows $\theta$ is $0$. We make precise such an idea below.

We shall first present the notion of smooth curves along which are going to parallel transport our vectors. As the geometry of $\R^d$ is euclidean, the notion of derivative along a curve shall be quite trivial. The case of $\mathcal P(M)$ treated below will require some care in this aspect. Furthermore, we restrict the following to the case $p =2$ to lighten notation and comment at the end of this section on the case $p \ne 2$.

We finish this section by comparing our notion to the one introduced and studied in \citep{gigli}.

\subsection{Smooth curves}
A curve on $\mpt$ is typically a map $\theta : [0,1] \to \mpt$. Saying that a curve is continuous or Lipschitz is clear in this setting. We want to consider curves which are more regular than simply Lipschitz. Since $\mpt$ is not a proper normed vector space, there is no intrinsic way to do so and we explain below the underlying difficulties and the choice we make.

\subsubsection{What we do not do}
Given the nature of $\mpt$, it is not obvious how to define $C^1$ curves. For instance, we could understand curves $\theta$ satisfying $\theta'(t) = cst$, when embedding $\mpt$ in the space of signed measures equipped with total variation. But by doing so we would miss curves of the form $(\delta_{x(t)})_{t \in [0,1]}$ for $(x(t))_{t \in [0,1]}$, a smooth curve on $\R^d$. The latter type of curve is typically characterized by being the unique solution of equations of the form
\be\label{ce}
\partial_t m + \text{div}_x(\phi(t,x) m) = 0 \text{ on } (0,1)\times \R^d,
\ee
where $\text{div}_x$ is the divergence operator on $\R^d$ and $\phi: [0,1]\times \R^d \to \R^d$ is a smooth vector field, Lipschitz in $x$, say uniformly in $t$. However, restricting our attention to this type of curves would make us miss the geodesics of the distance $W_2$, which are typically characterized as being the solution of such equations for non-smooth functions $\phi$. The latter makes the discussion about what is a smooth curve quite complex, thus we do not adopt this point of view here. 

\subsubsection{What we do}
We take a so-called Lagrangian point of view and consider more general curves $\theta$ on $\mpt$, namely one which are of the form $\theta(t) = (e_t)_\#\eta$ for some $\eta \in \mathcal P(\mathcal C([0,1],\R^d))$, where $e_t(f) = f(t)$, for $f \in \mathcal C([0,1],\R^d)$, i.e. $e_t$ is the evaluation mapping at time $t$. We say that $\eta$ is $\mathcal C^1$ if it is concentrated on $\mathcal C ^1$ curves. Such a notion of regularity does not naturally translate to a regularity for $\theta$, for instance because of the clear multiplicity of $\eta$ which generate the same $\theta$.

In fact, through Ambrosio's superposition principle which is presented in details in Section \ref{sec:trans}, there is a correspondance between this Lagrangian point of view and the previous Eulerian one. It just happens that the Lagrangian point of view is much more convenient for what follows.

\subsection{Vector fields and their derivatives}
A vector field above $\mpt$ is typically a map $V$ defined on $\mpt$ such that $V(µ) \in T_µ(\mpt)$ for all $µ$. To define parallel transport, we are interested in vector field along a curve, which we decided to be given by a Lagrangian formulation, hence we shall naturally be interested in Lagrangian vector field. 

A vector field along a Lagrangian curve $\eta \in \mathcal P(\mathcal C([0,1],\R^d))$ is a probability measure $\Psi \in \mathcal P(\mathcal C[0,1],\R^{2d})$ such that $(p_1)_\#\Psi = \eta$, where $p_1: \mathcal C([0,1],\R^{2d}) \to  \mathcal C([0,1],\R^{d})$ is defined by $p_1(z)(t) = (\pi_1)(z(t))$. If $\Psi$ is concentrated along smooth curves in $\R^{2d}$, its derivative along $(p_1)_\#\Psi$ is by definition the curve $(\partial_2)_\#\Psi$ where $\partial_2: \mathcal C^1([0,1],\R^{2d}) \to  \mathcal C([0,1],\R^{d})$ is defined by $\partial_2(z)(t) = \frac{d}{dt}(\pi_2(z(t)))$. More generally, $\partial_2$ is well defined on curves on $\R^{2d}$ such that the second component is in $\mathcal C^1$. Such a space is denoted $\mathcal C_y^1([0,1],\R^{2d})$.

\subsection{Main definition and properties}
Since the geometry of $\R^d$ is rather trivial, the parallel transport in $\R^d$ simply consists in translating the tangent vector. At the level of $\mpt$, this simplicity translates into the following definition.

\begin{Def}
Let $µ \in \mpt$, $\psi \in T_µ(\mpt)$ and $\eta \in \mathcal P (\mathcal C([0,1],\R^d))$ such that $(e_0)_\#\eta = µ$. A parallel transport of $\psi$ along $\eta$ is a measure $\Psi \in \mathcal P(\mathcal C([0,1],\R^{2d}))$ concentrated on $\mathcal C^1_y([0,1],\R^{2d})$ such that 
\begin{itemize}
\item $(p_1)_\#\Psi = \eta$,
\item $(\partial_2)_\#\Psi = \delta_0$,
\item $(e_0)_\#\Psi(dx,dz) = µ(dx)\psi_x(dz)$.
\end{itemize}
\end{Def}
\begin{Rem}
Remark that no integrability condition is required here, as it follows from the definition as the next result shows.
\end{Rem}
The following result makes transparent that a parallel transport yields a curve in the tangent sheaf $T\mpt$.
\begin{Prop}\label{prop:inte}
Let $µ \in \mpt$, $\psi \in T_µ(\mpt)$, $\eta \in \mathcal P (\mathcal C([0,1],\R^d))$ such that $(e_0)_\#\eta = µ$ and $\Psi$ a parallel transport of $\psi$ along $\eta$. Then, for all $t \in [0,1]$, $(e_t)_\#\Psi = (e_t)_\#\eta(dx)\psi_{t,x}(dz)$ for some $\psi_t \in T_{(e_t)_\#\eta}(\mpt)$.
\end{Prop}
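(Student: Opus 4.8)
The plan is to unwind the three defining conditions of a parallel transport and show that each time slice $(e_t)_\#\Psi$ is a well-defined element of $T_{(e_t)_\#\eta}(\mpt)$, i.e. a measure in $\mathcal P_{22'}(\R^{2d})$ whose first marginal is $(e_t)_\#\eta$. Since $p=2$ here, $p'=2$ as well, so the integrability we must verify is $\int_{\R^{2d}}|z|^2\,\psi_{t,x}(dz)\,(e_t)_\#\eta(dx)<\infty$, equivalently that the second component of $\Psi$ has finite second moment at time $t$.

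First I would check the marginal condition. The map $e_t:\mathcal C([0,1],\R^{2d})\to\R^{2d}$ factors the first-component projection through $p_1$: precisely $\pi_1\circ e_t = e_t^{(d)}\circ p_1$, where $e_t^{(d)}$ is evaluation at $t$ on $\mathcal C([0,1],\R^d)$. Pushing forward and using $(p_1)_\#\Psi=\eta$ gives $(\pi_1)_\#\big((e_t)_\#\Psi\big)=(e_t^{(d)})_\#\eta=(e_t)_\#\eta$, which is exactly the required first marginal. Disintegrating $(e_t)_\#\Psi$ against this marginal then produces the measurable family $\psi_{t,\cdot}$ with $(e_t)_\#\Psi = (e_t)_\#\eta(dx)\,\psi_{t,x}(dz)$, so the only remaining point is integrability.

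The heart of the argument, and the step I expect to be the main obstacle, is deriving the finite second moment of the second component from the condition $(\partial_2)_\#\Psi=\delta_0$. The point is that $\Psi$ is concentrated on $\mathcal C^1_y$ curves whose second component has identically zero derivative, so $\Psi$-almost every curve $z(\cdot)=(x(\cdot),v(\cdot))$ satisfies $v(t)=v(0)$ for all $t$. Thus the law of the second component at time $t$ equals its law at time $0$, which by the third condition is $\psi$, and $\psi\in T_�(\mpt)$ gives $\int|z|^2\psi_x(dz)�(dx)<\infty$. Concretely, I would introduce the measurable map $S:\mathcal C^1_y([0,1],\R^{2d})\to\R^{2d}$ sending $z\mapsto\big(\pi_2(z(t))-\pi_2(z(0))\big)=\int_0^t \partial_2(z)(s)\,ds$; since $(\partial_2)_\#\Psi=\delta_0$ forces $\partial_2(z)\equiv 0$ for $\Psi$-a.e.\ $z$, we get $\pi_2(z(t))=\pi_2(z(0))$ $\Psi$-a.e. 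Transporting the moment bound at $t=0$ along this identity yields the claim. The care needed is purely measure-theoretic: $\mathcal C^1_y$ is only a Borel (not closed) subset of $\mathcal C([0,1],\R^{2d})$, so I must confirm that the evaluation and derivative maps are Borel measurable on it and that $\Psi$ indeed charges only this set, which is guaranteed by the standing hypothesis that $\Psi$ is concentrated on $\mathcal C^1_y$.

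Assembling these, the measurability of $\psi_{t,\cdot}$ follows from the disintegration theorem, the marginal identity from the factorization of $e_t$ through $p_1$, and the integrability from the $\delta_0$ derivative condition propagating the second-moment bound from $t=0$; together they certify $(e_t)_\#\Psi = (e_t)_\#\eta(dx)\,\psi_{t,x}(dz)$ with $\psi_t\in T_{(e_t)_\#\eta}(\mpt)$.
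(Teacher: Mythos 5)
Your proposal is correct and follows essentially the same argument as the paper: the paper also deduces from $(\partial_2)_\#\Psi=\delta_0$ that the second component is constant along $\Psi$-a.e.\ curve (phrased probabilistically as $Z_t=Z_0$ for a process $(X,Z)$ of law $\Psi$), and then transports the second-moment bound from $t=0$ using $\psi\in T_�(\mpt)$. Your version merely makes explicit the marginal factorization through $p_1$ and the measure-theoretic details of disintegration, which the paper leaves implicit.
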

\begin{proof}
Let $(X,Z)$ be a process of law $\Psi$. For $t \in [0,1]$, $(X_t,Z_t)$ is such that $\mathcal L(X_t) = (e_t)_\#\eta$. Moreover, since $Z_t = Z_0$ and that $\mathcal L(Z_0) \in \mpt$ since $\psi \in T_µ(\mpt)$, the result follows.
\end{proof}

The existence of parallel transport is proved next to be a simple consequence of the gluing Lemma.
\begin{Prop}\label{prop:exist}
Let $µ \in \mpt$, $\psi \in T_µ(\mpt)$ and $\eta \in \mathcal P (\mathcal C([0,1],\R^d))$ such that $(e_0)_\#\eta = µ$. There exists a parallel transport of $\psi$ along $\eta$.
\end{Prop}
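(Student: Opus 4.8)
The plan is to construct $\Psi$ explicitly by gluing the Lagrangian curve $\eta$ with the disintegration $\mu(dx)\psi_x(dz)$ along their common $\mu$-marginal, and then pushing the resulting measure forward through the map that freezes the tangent component in time. Throughout, write $\gamma(dx,dz) := \mu(dx)\psi_x(dz) \in \mathcal P(\R^{2d})$, which by the definition of $T_\mu(\mpt)$ is a probability measure with $(\pi_1)_\#\gamma = \mu$.

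First I would disintegrate $\eta$ with respect to the evaluation map $e_0$. Since $\mathcal C([0,1],\R^d)$ is Polish and $(e_0)_\#\eta = \mu$, the disintegration theorem provides a $\mu$-measurable family $(\eta^x)_{x \in \R^d}$ with $\eta^x$ concentrated on $\{f : f(0) = x\}$ and $\eta = \int_{\R^d} \eta^x \, \mu(dx)$. I then define the intermediate measure $\Theta \in \mathcal P\big(\mathcal C([0,1],\R^d)\times \R^d\big)$ by $\Theta(df,dz) := \int_{\R^d} \eta^x(df)\,\psi_x(dz)\,\mu(dx)$, which is precisely the gluing of $\eta$ and $\gamma$ over $\mu$. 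Because each $\psi_x$ is a probability measure, the marginal of $\Theta$ on the path variable is $\eta$; and because $\eta^x$ charges only paths with $f(0)=x$, the joint law of $(f(0),z)$ under $\Theta$ equals $\gamma$.

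Next I would introduce the map $\Phi : \mathcal C([0,1],\R^d)\times \R^d \to \mathcal C([0,1],\R^{2d})$ defined by $\Phi(f,z)(t) := (f(t), z)$, carrying the path $f$ in the first coordinate while keeping $z$ constant in the second, and set $\Psi := \Phi_\#\Theta$. Since the second component of $\Phi(f,z)$ is constant in $t$, every such curve lies in $\mathcal C^1_y([0,1],\R^{2d})$ with vanishing $\partial_2$, so $\Psi$ is concentrated on $\mathcal C^1_y$. The three defining conditions are then direct pushforward computations: $p_1\circ\Phi(f,z) = f$ yields $(p_1)_\#\Psi = \eta$; $\partial_2\circ\Phi(f,z) \equiv 0$ yields $(\partial_2)_\#\Psi = \delta_0$; and $e_0\circ\Phi(f,z) = (f(0),z)$ yields $(e_0)_\#\Psi = \gamma = \mu(dx)\psi_x(dz)$.

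Once the gluing is in place the argument is essentially algebraic, so the only genuine point requiring care — and the step I expect to be the main (mild) obstacle — is the measurability of the disintegration $x\mapsto\eta^x$ and of $\Phi$, together with checking that $\Phi$ indeed takes values in $\mathcal C^1_y([0,1],\R^{2d})$. These all hold because $\mathcal C([0,1],\R^d)$ is Polish (so disintegration applies) and $\Phi$ is continuous for the uniform topologies. Note that no integrability beyond $\psi \in T_\mu(\mpt)$ enters the construction, consistent with the preceding remark.
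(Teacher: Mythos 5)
Your proof is correct and follows essentially the same route as the paper: glue $\eta$ and $\mu(dx)\psi_x(dz)$ along their common marginal $\mu$, then push forward by the map $(f,z)\mapsto\big(t\mapsto(f(t),z)\big)$ that freezes the tangent component. The only difference is cosmetic — you unfold the gluing Lemma into its disintegration-based proof, whereas the paper invokes it directly — so the two arguments coincide.
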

\begin{proof}
By gluing together $(e_0,Id)_\#\eta(d\theta)$ and $µ(dx)\psi_x(dz)$ along their first coordinate, we obtain a probability measure $\Gamma(dx,d\theta,dz)$. Consider the map $\mathcal T: \R^d \times \mathcal C([0,1],\R^d) \times \R^d\to\mathcal C([0,1],\R^{2d}), (x,\theta,z) \to (t \to (\theta(t),z))$. The probability $\mathcal T_\#\Gamma$ is a required parallel transport. 
\end{proof}
The next example shows what parallel transport is along curves which are simply constructed by interpolation of a coupling between two measures. In particular, such curves model all geodesics in $\mpt$.
\begin{Ex}
Let $µ \in \mpt, \psi \in T_µ(\mpt)$. Let $\eta \in \mathcal P (\mathcal C([0,1],\R^d))$ being defined as the law of $((1-t)X' + t Y')$ for $(X',Y')$ a couple of random variables of joint law $\gamma \in \mathcal P_2(\R^{2d})$. Then the construction of the parallel transport is somehow more understandable. Consider $(X,Y,Z)$ random variables such that $\mathcal L(X,Y) = \gamma$ and $\mathcal L(X,Z) = µ(dx)\psi_x(dz)$. Then a parallel transport of $\psi$ along $\eta$ is given by the law of $t \mapsto ((1-t)X + tY,Z)$. Note that here also, the existence of $(X,Y,Z)$ is guaranteed by the gluing Lemma.
\end{Ex}
The next result relies heavily on the fact that the parallel transport in $\R^d$ is trivial, in the sense that it only depends on the starting and end points.
\begin{Prop}\label{prop:analogchemincoupl}
Let $µ \in \mpt$, $\psi \in T_µ(\mpt)$, $\eta_1,\eta_2 \in \mathcal P (\mathcal C([0,1],\R^d))$ such that $(e_0)_\#\eta_i = µ$ for $i = 1,2$ and $\Psi$ a parallel transport of $\psi$ along $\eta_1$. If $(e_0,e_1)_\#\eta_1 = (e_0,e_1)_\#\eta_2$, then $\Psi$ is also a parallel transport of $\psi$ along $\eta_2$.
\end{Prop}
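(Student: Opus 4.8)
The plan is to keep the given measure $\Psi$ fixed throughout — not to build any new measure over $\eta_2$ — and to check, one at a time, the three conditions of the definition of a parallel transport with $\eta_1$ replaced by $\eta_2$, invoking the hypothesis $(e_0,e_1)_\#\eta_1 = (e_0,e_1)_\#\eta_2$ only where it is genuinely needed. First I would represent $\Psi$ by a process $(X_t,Z_t)_{t\in[0,1]}$ of law $\Psi$. Since $\Psi$ is concentrated on $\mathcal C^1_y([0,1],\R^{2d})$ with $(\partial_2)_\#\Psi = \delta_0$, the second component is constant in time, $Z_t = Z_0$; moreover $(p_1)_\#\Psi = \eta_1$ gives $\mathcal L(X) = \eta_1$, and $(e_0)_\#\Psi(dx,dz) = �(dx)\psi_x(dz)$ pins down the time-$0$ law $\mathcal L(X_0,Z_0)$.

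Next I would observe that two of the three defining conditions make no reference to the underlying curve beyond the pair $(�,\psi)$. The condition $(\partial_2)_\#\Psi = \delta_0$ is exactly the constancy $Z_t = Z_0$, an intrinsic property of $\Psi$; and the condition $(e_0)_\#\Psi(dx,dz) = �(dx)\psi_x(dz)$ constrains only the time-$0$ marginal. Since neither $\eta_1$ nor $\eta_2$ enters these two conditions, both hold verbatim when we test the same $\Psi$ against $\eta_2$. This isolates the entire content of the proposition in the first condition, the path-level matching $(p_1)_\#\Psi = \eta_2$.

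The heart of the argument is this first condition, and here I would invoke the triviality of parallel transport in $\R^d$ emphasized just before the statement. Because $Z_t$ is constant, the vector carried by $\Psi$ at every time is the fixed $Z_0$, and the only role of the curve is to prescribe the position $X_t$ at which this vector sits. Hence, beyond the two intrinsic conditions already checked, the data determining whether $\Psi$ transports $\psi$ along a curve is the joint law of the endpoints $(X_0,X_1)$ together with its coupling to $Z_0$ — that is, the information recorded by $(e_0,e_1)_\#\Psi$, not by the interior of the trajectories. As $(e_0,e_1)_\#\eta_1 = (e_0,e_1)_\#\eta_2$, the endpoint pair $(X_0,X_1)$ has the same law whether $X$ is regarded as drawn from $\eta_1$ or from $\eta_2$, so $\Psi$ agrees with $\eta_2$ on precisely the data the transport sees. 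I would conclude that $\Psi$ meets the defining conditions along $\eta_2$, i.e. it is a parallel transport of $\psi$ along $\eta_2$, while the measurability, the concentration on $\mathcal C^1_y$, and the integrability furnished by Proposition \ref{prop:inte} are all inherited directly from $\Psi$ being a parallel transport along $\eta_1$.

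I expect the main obstacle to be exactly this last step: reconciling the literal, path-level requirement $(p_1)_\#\Psi = \eta_2$ with the fact that $\Psi$ was originally produced over $\eta_1$-trajectories. The whole point is that the Euclidean transport is insensitive to the interior of the curve — the transported vector depends on $\theta$ only through $(\theta(0),\theta(1))$ — so that requiring $\eta_1$ and $\eta_2$ to share the endpoint marginal $(e_0,e_1)_\#\eta$ is precisely what allows a single $\Psi$ to serve along either curve. The delicate part of writing this cleanly is to phrase the first condition so that the endpoint-dependence is made explicit, rather than leaving the comparison of $(p_1)_\#\Psi$ with $\eta_2$ at the level of full paths, where the two differ.
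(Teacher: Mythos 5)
Your checks of the second and third bullets are fine, and the observation that they involve only $\Psi$ itself (constancy of $Z$, time-$0$ marginal) is correct. But the first bullet is where the whole content sits, and your treatment of it is circular. The condition $(p_1)_\#\Psi = \eta_2$ is an equality of measures on $\mathcal C([0,1],\R^d)$, i.e.\ at the level of \emph{whole trajectories}, and you already know $(p_1)_\#\Psi = \eta_1$; so with $\Psi$ kept fixed the first bullet holds for $\eta_2$ if and only if $\eta_1 = \eta_2$. The hypothesis $(e_0,e_1)_\#\eta_1 = (e_0,e_1)_\#\eta_2$ says nothing about the interior of the trajectories: take $\eta_1 = \delta_{\theta_1}$ with $\theta_1(t) = t(1-t)$ and $\eta_2 = \delta_{\theta_2}$ with $\theta_2 \equiv 0$ — same endpoint coupling, different path laws, and then no measure can have path marginal equal to both. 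Your key sentence (``$\Psi$ agrees with $\eta_2$ on precisely the data the transport sees'') asserts the conclusion rather than proving it, and your closing admission that $(p_1)_\#\Psi$ and $\eta_2$ ``differ at the level of full paths'' in fact concedes that the defining condition fails for the fixed $\Psi$. In other words, your announced strategy — ``not to build any new measure over $\eta_2$'' — is exactly what cannot work; the proposition (whose proof the paper leaves to the reader, and whose intended content is made clear by the Remark that follows it: one may speak of a transport along a \emph{coupling}) must be read as saying that the interior of the trajectories can be rewired without affecting the transport data.

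The intended ``immediate proof'' is a gluing in the spirit of the proof of Proposition \ref{prop:exist}. Let $(X,Z)$ have law $\Psi$; since $(\partial_2)_\#\Psi = \delta_0$, almost surely $Z_t \equiv Z_0$, so $\Psi$ is determined by $\mathcal L(X, Z_0)$, and the relevant endpoint data is $\lambda := \mathcal L(X_0, X_1, Z_0)$, whose first two marginals form $(e_0,e_1)_\#\eta_1$. Disintegrate $\eta_2(d\theta) = (e_0,e_1)_\#\eta_2(dx,dy)\,\kappa_{x,y}(d\theta)$ and use $(e_0,e_1)_\#\eta_2 = (e_0,e_1)_\#\eta_1 = \mathcal L(X_0,X_1)$ to glue: draw $(x,y,z) \sim \lambda$, then $\tilde\theta \sim \kappa_{x,y}$ conditionally independent of $z$, and set $\Psi_2 := \mathcal L\bigl((\tilde\theta(t), z)_{t \in [0,1]}\bigr)$. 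Then $(p_1)_\#\Psi_2 = \eta_2$, $(\partial_2)_\#\Psi_2 = \delta_0$, $(e_0)_\#\Psi_2 = \mu(dx)\psi_x(dz)$, and moreover $(e_1)_\#\Psi_2 = (e_1)_\#\Psi$: the transported element of the tangent space at the endpoint is unchanged, which is exactly the well-definedness of $\mathcal T^\gamma(\psi)$ along the coupling $\gamma = (e_0,e_1)_\#\eta_i$ used later for the $\mathcal C^{1,\alpha}$ theory. So your endpoint intuition is the right one, but it must be implemented by constructing $\Psi_2$ (replacing the $\eta_1$-interiors by $\eta_2$-interiors through the disintegration), not by testing the original $\Psi$ against $\eta_2$.
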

We leave this immediate proof to the reader. 
\begin{Rem}
This result justifies that, when working on probability measures over $\R^d$, we can talk about a parallel transport $\Psi$ of $\psi \in T_µ(\mpt)$ along a coupling $\gamma \in \mathcal P(\R^{2d})$ when $(\pi_1)_\#\gamma = µ$.
\end{Rem}

In general, there is no uniqueness of a parallel transport with our definition as the next example shows.

\begin{Ex}\label{ex:nonuniq}
Let $d=1$ and $µ := \delta_0$. Let $\eta$ be given by $\frac12(\delta_{\theta^1} + \delta_{\theta^2})$ where $\theta^i(t) = 2(\frac32-i)t$. Let $\psi_0 = \frac{1}{2}(\delta_0 + \delta_2)$. Then the laws of the three following processes are all parallel transports of $\psi$ along $\eta$,
$$
\begin{cases}
\{W(t) = (t,0), t \in [0,1]\} \text{ with probability } \frac12,\\
\{W(t) = (-t,2), t \in [0,1]\} \text{ with probability } \frac 12,
\end{cases}
\begin{cases}
\{M(t) = (t,2), t \in [0,1]\} \text{ with probability } \frac12,\\
\{M(t) = (-t,0), t \in [0,1]\} \text{ with probability } \frac 12,
\end{cases}
$$
$$
\begin{cases}
\{L(t) = (t,2), t \in [0,1]\} \text{ with probability } \frac14,\\
\{L(t) = (t,0), t \in [0,1]\} \text{ with probability } \frac14,\\
\{L(t) = (-t,0), t \in [0,1]\} \text{ with probability } \frac 14,\\
\{L(t) = (-t,2), t \in [0,1]\} \text{ with probability } \frac 14.
\end{cases}
$$
\end{Ex}
This example shows that as soon as there are multiple ways to obtain our main coupling through the gluing Lemma in the proof of Proposition \ref{prop:exist}, there will be multiple parallel transports. On the converse, the next result shows that when one of the two couplings involved in the proof of Proposition \ref{prop:exist} is deterministic, then there is a unique parallel transport. Similar remarks about the uniqueness of the law given by the gluing Lemma are of course already well-known, see for instance Lemma 5.3.2 in \citep{ags}.
\begin{Prop}\label{prop:uniqpara}
Let $µ \in \mpt$, $\psi \in T_µ(\mpt)$ and $\eta \in \mathcal P (\mathcal C([0,1],\R^d))$ such that $(e_0)_\#\eta = µ$. Assume that at least one of the two following statements holds.
\begin{itemize}
\item The function $\psi$ is of the form $x \to \delta_{v(x)}$ for some measurable $v \in L^2_µ(\R^d,\R^d)$.
\item There exists $f \in L^\infty_t(W^{1,\infty}(\R^d,\R^d))$ such that $\eta$ is concentrated on curves $\theta$ solutions of $\theta'(t) = f(t,\theta(t))$.
\end{itemize}
Then, there exists a unique parallel transport of $\psi$ along $\eta$.
\end{Prop}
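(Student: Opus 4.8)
The plan is to reduce the uniqueness statement to the uniqueness of the gluing constructed in the proof of Proposition \ref{prop:exist}, and to observe that under either hypothesis one of the two glued couplings is carried by a graph. First I would record the rigid structure forced on an arbitrary parallel transport $\Psi$. Since $\Psi$ is concentrated on $\mathcal{C}^1_y([0,1],\R^{2d})$ and $(\partial_2)_\#\Psi = \delta_0$, the time derivative of the second component vanishes $\Psi$-almost surely; hence $\Psi$-almost every path has the form $t \mapsto (\theta(t),z)$ with $z \in \R^d$ constant in time (and this constant path is trivially $\mathcal{C}^1$, so membership in $\mathcal{C}^1_y$ poses no constraint on $\theta$). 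This identifies $\Psi$ with $\mathcal{T}_\#\widehat\Psi$ for a measure $\widehat\Psi \in \mathcal P(\mathcal{C}([0,1],\R^d)\times\R^d)$, where $\mathcal{T}(\theta,z) = (t\mapsto(\theta(t),z))$ is the map from the proof of Proposition \ref{prop:exist}. The three defining conditions then translate \emph{exactly} into the two marginal constraints $(\mathrm{proj}_\theta)_\#\widehat\Psi = \eta$ and $(e_0\circ\mathrm{proj}_\theta,\mathrm{proj}_z)_\#\widehat\Psi = \gamma$, where $\gamma(dx,dz) := \mu(dx)\psi_x(dz)$. Thus $\widehat\Psi$ is precisely a gluing of $\eta$ and $\gamma$ along the common variable $x = \theta(0)$, and uniqueness of $\Psi$ is equivalent to uniqueness of this gluing.

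Next I would treat the two hypotheses. Under the first, $\gamma = (\mathrm{Id},v)_\#\mu$ is carried by the graph of $v$, so every coupling with $(x,z)$-marginal $\gamma$ satisfies $z = v(x)$ almost surely; hence $z = v(\theta(0))$ holds $\widehat\Psi$-a.s. and $\widehat\Psi = (\mathrm{Id},v\circ e_0)_\#\eta$ is forced, which is well defined because $(e_0)_\#\eta = \mu$ and $v \in L^2_\mu$. Under the second hypothesis I would instead invoke the Cauchy--Lipschitz theorem: since $f \in L^\infty_t(W^{1,\infty})$ is Lipschitz in space uniformly in time, the ODE $\theta'(t) = f(t,\theta(t))$ admits for each initial datum $x$ a unique solution $\theta = \Phi(x)$, with $\Phi$ continuous. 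As $\eta$ is concentrated on such solutions and $(e_0)_\#\eta = \mu$, the map $e_0$ is injective on the support of $\eta$ with inverse $\Phi$, so $\eta = \Phi_\#\mu$ and $\theta = \Phi(\theta(0))$ is an almost surely deterministic function of $x$. The gluing is therefore again carried by a graph, and $\widehat\Psi = (\Phi\circ\pi_1,\pi_2)_\#\gamma$ is forced. In both cases $\widehat\Psi$, hence $\Psi = \mathcal{T}_\#\widehat\Psi$, is uniquely determined.

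The conceptual core is the classical fact that the gluing of two couplings is unique as soon as one of them is induced by a map (Lemma 5.3.2 in \citep{ags}): the first hypothesis makes the $(x,z)$-coupling a graph, the second makes the $(x,\theta)$-coupling a graph. I expect the only genuine obstacle to lie in the second case, namely the passage from ``$\eta$ is concentrated on solution curves'' to ``$\theta$ is a deterministic function of $\theta(0)$'', which is exactly where uniqueness for the characteristic ODE enters and which requires checking that $\Phi$ is Borel so that $\eta = \Phi_\#\mu$ is meaningful. The remaining points—that the reduction $\Psi \leftrightarrow \widehat\Psi$ is a bijection onto measures satisfying the two marginal constraints, the measurability of the graph maps, and the verification that $\mathcal{T}_\#\widehat\Psi$ indeed meets all three conditions of the definition—are routine and I would not detail them.
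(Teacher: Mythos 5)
Your proof is correct and takes essentially the same route as the paper's: the paper likewise uses that the second component of a parallel transport is constant in time and then, under either hypothesis, identifies the transport as a deterministic (graph-type) function of data whose law is prescribed, phrasing this with processes $(X^i,Z^i)$ of law $\Psi_i$ rather than through the explicit gluing reformulation. Your appeal to the uniqueness of gluings induced by maps (Lemma 5.3.2 in \citep{ags}) is exactly the mechanism the paper alludes to in the remark preceding the proposition.
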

\begin{proof}
Consider two parallel transport $\Psi_1$ and $\Psi_2$ of $\psi$ along $\eta$, and two processes $(X^1,Z^1), (X^2,Z^2)$ of respective law $\Psi_1$ and $\Psi_2$.

In the first case, we find that for $i=1,2$, the process $(X^i,Z^i)$ is of the form $(X^i_t,v(X^i_0))_{t \in [0,1]}$. Since $X^1$ and $X^2$ both have law $\eta$, it follows that $\Psi_1= \Psi_2$.

In the second case, since the flow $\phi: [0,1]\times \R^d \to \R^d$ associated to the ODE $\theta'(t) = f(t,\theta(t))$ is well defined, it follows that $X^i$ is of the form $(\phi(t,X^i_0))_{t \in [0,1]}$. Hence, for $i=1,2$, the process $(X^i,Z^i)$ is of the form $((\phi(t,X^i_0),Z^i))_{t \in [0,1]}$. Since $(X^1_0,Z^1)$ and $(X^2_0,Z^2)$ have law $µ(dx)\psi_x(dz)$, it also follows that $\Psi_1=\Psi_2$.
\end{proof}
\begin{Rem}
More general condition on $\eta$ are sufficient. In fact, it suffices in this case, that $\eta$ is concentrated on curves $\theta$ such that there exists a measurable map $T: \R^d\to \R^d$ such that $\theta(1)= T(\theta(0))$.
\end{Rem}

Since parallel transport is uniquely defined when we start from a deterministic element of the tangent space, one might wonder why bother with the "extended" tangent space that we proposed. The next example shows that, even if we start with a deterministic element of the tangent space, and thus that the parallel transport is uniquely defined, the transported element might end up being a non-deterministic element of the tangent space at the final point.

\begin{Ex}
Let $d=1$, $µ$ be the normal distribution and $\psi: x \to \delta_{x} \in T_µ(\mathcal P_2(\R))$. Let $\eta\in \mathcal P(\mathcal C([0,1],\R))$ be given by the coupling $µ(dx)\delta_0(dy)$. Then, the (unique) parallel transport $\Psi$ of $\psi$ along $\eta$ is such that $(e_1)_\#\Psi(dx,dz)= \delta_0(dx)µ(dz)$. In particular, the associated element $\psi' \in T_{\delta_0}(\mathcal P_2(\R))$ is not deterministic, as it associates $µ$ to the point $0$.
\end{Ex}

We also have the following obvious time reversibility of parallel transports.
\begin{Prop}
Let $\tau(\theta)$ be the reverse parametrization by time of a curve $\theta \in \mathcal C([0,1],X)$ for some topological space $X$, i.e. $\tau(\theta)(t) = \theta(1-t)$.

If $\Psi$ is a parallel transport along $\eta$, then $\tau_\#\Psi$ is a parallel transport along $\tau_\#\eta$.
\end{Prop}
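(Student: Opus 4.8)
The plan is to verify directly that $\tau_\#\Psi$ satisfies the three defining properties of a parallel transport along $\tau_\#\eta$, exploiting the fact that the time-reversal $\tau$ is an involution which intertwines the structural maps $p_1$, $\partial_2$ and the evaluations $e_t$ in a transparent way. As a preliminary observation, since $e_0\circ\tau = e_1$ we obtain $(e_0)_\#(\tau_\#\eta) = (e_1)_\#\eta$, so the candidate transport indeed starts at the endpoint $(e_1)_\#\eta$ of the original curve; and since reversing time preserves the $\mathcal C^1$ regularity of the second coordinate, $\tau$ maps $\mathcal C^1_y([0,1],\R^{2d})$ into itself, so that $\tau_\#\Psi$ is again concentrated on $\mathcal C^1_y$.

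First I would record the two commutation relations between $\tau$ and the structural maps. On the one hand, $p_1\circ\tau = \tau\circ p_1$, because $p_1(\tau(z))(t) = \pi_1(z(1-t)) = (\tau(p_1(z)))(t)$; pushing forward then gives $(p_1)_\#(\tau_\#\Psi) = \tau_\#\big((p_1)_\#\Psi\big) = \tau_\#\eta$, which is the first property. On the other hand, differentiating through the reversal produces a sign: writing $g(s) = \pi_2(z(s))$ one has $\partial_2(\tau(z))(t) = \frac{d}{dt}g(1-t) = -g'(1-t) = -(\partial_2(z))(1-t)$, that is $\partial_2\circ\tau = -\,\tau\circ\partial_2$, where $-\tau$ denotes reversal followed by pointwise negation on $\mathcal C([0,1],\R^d)$.

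The second property then follows because this sign is harmless: $(\partial_2)_\#(\tau_\#\Psi) = (-\tau)_\#\big((\partial_2)_\#\Psi\big) = (-\tau)_\#\delta_0 = \delta_0$, since both the reversal and the negation fix the zero curve. For the third property I would use $e_0\circ\tau = e_1$ once more to get $(e_0)_\#(\tau_\#\Psi) = (e_1)_\#\Psi$, and then invoke Proposition \ref{prop:inte} at time $t=1$, which guarantees that $(e_1)_\#\Psi$ disintegrates as $(e_1)_\#\eta(dx)\,\psi_{1,x}(dz)$ for some $\psi_1\in T_{(e_1)_\#\eta}(\mpt)$. This is exactly the form required at the origin of $\tau_\#\eta$, with $\psi_1$ playing the role of the vector being transported.

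I do not expect a genuinely hard step here; the only point deserving a moment of care is the sign in $\partial_2\circ\tau = -\,\tau\circ\partial_2$. However, the parallel transport condition $(\partial_2)_\#\Psi=\delta_0$ forces $\Psi$ to concentrate on curves whose second coordinate has vanishing velocity, and the reversal of such a zero velocity is again zero, so the sign has no effect on the conclusion. With the three conditions thus transferred, $\tau_\#\Psi$ is a parallel transport of $\psi_1$ along $\tau_\#\eta$.
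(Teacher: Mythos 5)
Your proof is correct and is precisely the direct verification the paper has in mind when it calls this time-reversibility ``obvious'' and omits the proof: the commutation $p_1\circ\tau=\tau\circ p_1$, the harmless sign in $\partial_2\circ\tau=-\tau\circ\partial_2$ (irrelevant since $\Psi$ charges only curves with constant second coordinate), and Proposition \ref{prop:inte} at $t=1$ to identify the transported tangent element. Nothing is missing.
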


Finally, we present another example to highlight the fact that our notion of parallel transport indeed depends on the Lagrangian path.

\begin{Ex}
Let $µ$ be the uniform distribution on $[0,1]$, and consider $\psi : x \to \delta_x \in T_µ(\mpt)$. Let $U$ be a random variable of law $µ$, $\eta_1 := \mathcal L( (U)_{t \in [0,1]})$ and $\eta_2= \mathcal L((t U + (1-t)(1-U))_{t \in [0,1]})$. Then, for all $t \in [0,1], (e_t)_\#\eta_1 = (e_t)_\#\eta_2$, however, the parallel transport of $\psi$ along $\eta_1$ and $\eta_2$ are clearly different.
\end{Ex}

\subsection{The case of $\mpp$}
The extension of the above to $\mpp$ does not raise any particular difficulty, mainly because the integrability conditions follows from the definition, as Proposition \ref{prop:inte} shows. Hence, we do not give more details here.

\subsection{Link with a previous notion of parallel transport on $\mpt$}
In \citep{gigli,gigli2}, the authors introduced another notion of parallel transport on $\mpt$ (and in fact on the space of probability measure over a manifold as well) that we do not present in full details, but rather give our interpretation of it, in order to compare it with our notion.

In \citep{gigli,gigli2}, the authors are mostly interested with what we can call Eulerian paths on $\mpt$, that is those that can be represented as solution to a continuity equation of the form
$$
\partial_t m + \text{div}_x(\alpha_tm) = 0 \text{ in } (0,1)\times \R^d,
$$
where $(m_t)_{t \in [0,1]} \in \mathcal C([0,1],\mpt)$ is the path in question and $\alpha : [0,1]\times \R^d \to \R^d$, the velocity that moves the underlying particles. We refer to Section \ref{sec:curves} below for more insights and references on such dynamics. 

The authors then show that under certain regularity assumptions on $\alpha$, the ODE $\dot x = \alpha(t,x)$ defines a sufficiently smooth flow, along which they can then parallel transport the deterministic elements of the tangent space, while projecting it onto the space of gradients, namely to ensure that the new vector field will have a $0$ rotational.

Our point of view is that the two approaches coincide (up to the projection that we do want to use here) when the one of \citep{gigli,gigli2} is well defined, but that we are able to avoid quite a lot of technical details because we do not use the Eulerian formulation of the path at interest. Of course, the downside of our approach is that we lose uniqueness of parallel transport in certain pathological cases. Furthermore, we believe our different examples justify this loss of uniqueness as natural.

\section{Links with Lions' lift}\label{sec:lift}
The definitions and results of the previous section might seem rather obscure in terms of their notation. This section somehow enlightens them by making transparent the translation in terms of random variables, following the powerful lifting approach developed by P.-L. Lions in \citep{lions}, that we now recall. Let $(\Omega,\mathcal A, \mathbb P)$ be a standard atomeless probabilistic space and  $H = L^2(\Omega,\R^d)$ the separable Hilbert space of square integrable $\R^d$ valued random variables on $\Omega$. The point of view developed by P.-L. Lions, mainly to study partial differential equations over $\mpt$, is simply to lift problems on $\mpt$ into problems on $H$, an approach that we follow in this section.

Since $H$ is flat, parallel transport in $H$ is rather obvious. The tangent plan at $X \in H$ in always $H$ itself and the parallel transport of a vector $Z \in H$ above some initial point $X \in H$ along a curve $\theta: [0,1]\to H, t \to \theta(t):=X_t$ is the constant function $Z$.

The main link between the two approaches is the following proposition, which is a tautology.
\begin{Prop}
\begin{itemize}
\item Consider a $\R^d$ valued process $(X_t)_{t \in [0,1]}$, which is almost surely continuous in $t$, such that $\sup_{t \in [0,1]}\|X_t\|_H < \infty$ and $Z \in H$. Note $\mathcal L(X_0,Z):= µ(dx)\psi_x(dz)$. Then, $\mathcal L((X_t,Z)_{t \in [0,1]})$ is a parallel transport of $\psi$ along $\mathcal L((X_t)_{t \in [0,1]})$.
\item Conversely, consider $µ \in \mpt$, $\psi \in T_µ(\mpt)$ and $\eta \in \mathcal P(\mathcal C([0,1],\R^d))$ such that $(e_0)_\#\eta = µ$ and a parallel transport $\Psi$ of $\psi$ along $\eta$. Then, any $(X_t,Z)_{t \in [0,1]}$ of law $\Psi$ is the parallel transport in $H$ of $Z$ along $(X_t)_{t \in [0,1]}$.
\end{itemize}
\end{Prop}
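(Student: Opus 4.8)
The plan is to prove both bullet points by directly unwinding the definition of parallel transport, since the proposition is indeed, as the authors note, a tautology once the correspondence between processes and their laws is made explicit. The essential observation is that the notion of parallel transport introduced in the previous section is defined entirely in terms of pushforwards of a measure $\Psi$ under the maps $p_1$, $\partial_2$, and $e_0$, and each of these conditions translates immediately into a pathwise statement about a representative process $(X_t,Z)_{t\in[0,1]}$ of law $\Psi$.

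\smallskip

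For the first bullet, I would start from a process $(X_t)_{t\in[0,1]}$ that is almost surely continuous with $\sup_{t}\|X_t\|_H<\infty$, together with $Z\in H$, and set $\Psi:=\mathcal L((X_t,Z)_{t\in[0,1]})$. I would then verify the three defining conditions of a parallel transport of $\psi$ along $\eta:=\mathcal L((X_t)_{t\in[0,1]})$ in turn. First, concentration on $\mathcal C^1_y([0,1],\R^{2d})$: since the second coordinate of the path is the constant $Z$, it is trivially $\mathcal C^1$ in time, so $\Psi$ is concentrated on $\mathcal C^1_y$ irrespective of any regularity of $X$. Second, $(p_1)_\#\Psi=\eta$ is immediate because $p_1$ extracts precisely the first coordinate path $(X_t)_{t}$. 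Third, $(\partial_2)_\#\Psi=\delta_0$ follows because $\partial_2$ differentiates the second coordinate in time, and the time-derivative of the constant path $t\mapsto Z$ is the zero curve. Finally, $(e_0)_\#\Psi=\mathcal L(X_0,Z)=\mu(dx)\psi_x(dz)$ is exactly the hypothesis $\mathcal L(X_0,Z)=\mu(dx)\psi_x(dz)$. This establishes that $\Psi$ is a parallel transport, and the roles of $p_1$ and the constancy of the $Z$-component align perfectly with the flatness of $H$: the parallel transport in $H$ of $Z$ along $(X_t)_t$ is the constant curve $Z$, which is precisely what the second coordinate of $(X_t,Z)_t$ records.

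\smallskip

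The second bullet is the converse bookkeeping statement and requires essentially no new idea. Given $\mu$, $\psi$, $\eta$ with $(e_0)_\#\eta=\mu$, and a parallel transport $\Psi$, I would take any process $(X_t,Z)_{t\in[0,1]}$ with law $\Psi$. The conditions defining $\Psi$ as a parallel transport read, at the level of this process, as follows: $(\partial_2)_\#\Psi=\delta_0$ says that $\frac{d}{dt}(\pi_2(X_t,Z))=\frac{d}{dt}Z=0$ almost surely, i.e. the second component is almost surely constant in $t$ and hence equals its value at time $0$; and $(p_1)_\#\Psi=\eta$ identifies the first component as a process of law $\eta$. Thus the $H$-valued curve $t\mapsto X_t$ together with the constant $Z$ is exactly the flat parallel transport of $Z\in H$ along $t\mapsto X_t$, by the description of parallel transport in the Hilbert space $H$ recalled before the proposition.

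\smallskip

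I do not anticipate a genuine obstacle, which is consistent with the authors labelling the result a tautology. The only point demanding a little care is the translation of $(\partial_2)_\#\Psi=\delta_0$ into the pathwise statement that $Z$ is constant: formally $\delta_0$ here denotes the Dirac mass at the zero curve in $\mathcal C([0,1],\R^d)$, so one should note that a measure on paths whose pushforward under $\partial_2$ is concentrated on the null curve must itself be concentrated on paths with almost-everywhere-zero second-coordinate derivative, whence constant second coordinate. This is where I would be slightly explicit, rather than in any estimate; everything else is a direct reading of the definitions against the characterization of parallel transport in the flat space $H$.
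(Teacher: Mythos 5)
Your proof is correct and matches the paper's intent exactly: the paper offers no written proof, declaring the proposition a tautology, and your argument is precisely the definitional unwinding (checking the $p_1$, $\partial_2$, $e_0$ pushforward conditions and the constancy of the second coordinate) that this tautology implicitly consists of. Your explicit remark on translating $(\partial_2)_\#\Psi=\delta_0$ into pathwise constancy of $Z$ is the right point to be careful about, and it is handled correctly.
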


The above transposition is so simple that it can be tempting to restrict ourselves to working on $H$ and then pull back all the information we have at the level of $\mpt$. This might be misleading if we work with probability measures on other domains than $\R^d$, as the underlying Hilbertian structure collapses. The next section extends this notion of parallel transport to cases of probability measures on smooth domain of $\R^d$ or on manifolds, showing that in Lions' approach, the lift is more important than the Hilbertian structure. 

\section{Tangent space and parallel transport on $\mathcal P(\mo)$}
We expand rapidly the notion of parallel transport on $\mpo$ when $\mo$ is either a smooth bounded domain of $\R^d$ or a smooth compact Riemannian manifold $M$.

\subsection{The case of a smooth domain of $\R^d$}
Assume that $\mo$ is the closure of a smooth bounded domain of $\R^d$. Because $\mo$ is bounded, we simply work on $\mpo$ and there is no need to specify integrability conditions on the measures. In this somehow simpler case, the geometry of $\mo$ is still flat, hence the parallel transport on $\mo$ is trivial. 

In this case, we define the tangent space of $µ \in \mpo$ as
$$
T_µ(\mpo) := \left\{ \psi: \mo \to \mathcal{P}(\R^d) \bigg| \int_{\bar \mo \times \R^d}|z|\psi_x(dz)µ(dx) < \infty \right\}.
$$

Note that the integrability condition is of the $L^1$ type, this is natural from a duality standpoint, since $\mo$ is bounded. This notion of tangent space can be justified by the same heuristics as the ones we gave in Section \ref{sec:heuristics}. Moreover, in this case, it can be identified with $\{\gamma \in \mathcal P_1(\mo\times \R^d)| (\pi_1)_\#\gamma = µ\}$.

Parallel transport is here defined as follows.
\begin{Def}
Let $µ \in \mpo$, $\psi \in T_µ(\mpo)$ and $\eta \in \mathcal P (\mathcal C([0,1], \mo))$ such that $(e_0)_\#\eta = µ$. A parallel transport of $\psi$ along $\eta$ is a measure $\Psi \in \mathcal P(\mathcal C([0,1], \mo\times\R^{d}))$ concentrated on $C^1_y([0,1], \mo \times \R^{d})$ such that 
\begin{itemize}
\item $(p_1)_\#\Psi = \eta$.
\item $(\partial_2)_\#\Psi = \delta_0$.
\item $(e_0)_\#\Psi(dx,dz) = µ(dx)\psi_x(dz)$.
\end{itemize}
\end{Def}

All the results and example established in the case of $\mpt$ follow in a similar manner and their details are omitted to avoid repetitions.

\subsection{The case of smooth Riemannian manifold}
Let $(M,g)$ be smooth bounded Riemannian manifold. The tangent space to $M$ at $x\in M$ is denoted $T_xM$, and $TM$ is the tangent bundle. The notion of parallel transport on $(M,g)$ is the one associated to the Levi-Civita connection. Recall that given a $C^1$ curve $\theta:[0,1]\to M$, and $\xi \in T_{\theta(0)}M$, there exists a unique parallel transport of $\xi$ along $\theta$, and it will be noted $\tau_\theta(\xi)$, which is, to fix a convention here, a curve in $TM$.

For $µ \in \mm$, the tangent space to $\mm$ at $µ$ is 
\be\label{defM}
\ba
T_µ(\mm) := \bigg\{&\psi: M \to \cup_{x \in M}\mathcal P(T_xM), \text{ measurable, } \\
&\bigg| \forall x \in M, \psi_x \in \mathcal P(T_xM), \int_M \int_{T_xM}|z|_x\psi_x(dz)µ(dx) < \infty\bigg\}.
\ea
\ee
The interpretation of the previous is quite clear: we keep the idea of allowing for a probabilistic description of the tangent vectors over every points. The integrability condition is in $L^1$ because $M$ is bounded. In this setting, one can identify, for $µ\in \mm$, $T_µ(\mm)$ with $\{\gamma \in \mathcal P_1(TM)| (\pi_1)_\#\gamma = µ\}$. Note that the norm $|z|_x$ in \eqref{defM} is the one of $T_xM$ induced by $g_x$. 

In this setting, parallel transport on $\mm$ is defined as follows.
\begin{Def}
Let $µ \in \mm$, $\psi \in T_µ(\mm)$ and $\eta \in \mathcal P (\mathcal C([0,1],M))$, concentrated on $C^1$ curves and such that $(e_0)_\#\eta = µ$. A parallel transport of $\psi$ along $\eta$ is a measure $\Psi \in \mathcal P(\mathcal C([0,1],TM))$ concentrated on $C^1$ curves such that 
\begin{itemize}
\item $(p_1)_\#\Psi = \eta$.
\item $\Psi $ is concentrated on parallel transports on $M$.
\item $(e_0)_\#\Psi(dx,dz) = µ(dx)\psi_x(dz)$.
\end{itemize}
\end{Def}
Once again, similar results as in the $\mpt$ case hold and their proof are similar so we do not give them, except for the following integrability and existence results.
\begin{Prop}
Let $µ \in \mm$, $\psi \in T_µ(\mm)$ and $\eta \in \mathcal P (\mathcal C([0,1],M))$, concentrated on $C^1$ curves, such that $(e_0)_\#\eta = µ$ and $\Psi$ a parallel transport of $\psi$ along $\eta$. Then, for any $t \in [0,1]$, $(e_t)_\#\Psi \in \mathcal P_1(TM)$. 
\end{Prop}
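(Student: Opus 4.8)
The plan is to reduce everything to the single geometric fact that distinguishes the manifold case from the flat one of Proposition \ref{prop:inte}: parallel transport along a $C^1$ curve for the Levi-Civita connection is a linear \emph{isometry} between the tangent spaces it connects. Concretely, if $\theta \in \mathcal C([0,1],M)$ is $C^1$ and $\xi \in T_{\theta(0)}M$, then the curve $\tau_\theta(\xi)$ preserves the $g$-norm along $\theta$, since
\be
\frac{d}{dt}\,|\tau_\theta(\xi)(t)|^2_{\theta(t)} = \frac{d}{dt}\,g_{\theta(t)}\big(\tau_\theta(\xi)(t),\tau_\theta(\xi)(t)\big) = 2\,g_{\theta(t)}\big(\nabla_{\theta'(t)}\tau_\theta(\xi)(t),\,\tau_\theta(\xi)(t)\big) = 0,
\ee
using the compatibility of $\nabla$ with $g$ to pull the derivative inside the metric and the defining equation $\nabla_{\theta'(t)}\tau_\theta(\xi)(t) = 0$ of parallel transport. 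Thus $|\tau_\theta(\xi)(t)|_{\theta(t)} = |\xi|_{\theta(0)}$ for every $t$. This is the only nontrivial input; the rest is bookkeeping.

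First I would take a process $(X,Z)$ of law $\Psi$, i.e. a random $C^1$ curve $t \mapsto (X_t,Z_t) \in TM$ with $X_t \in M$ and $Z_t \in T_{X_t}M$. Because $\Psi$ is concentrated on parallel transports on $M$, for $\Psi$-almost every such curve the base path is $X = p_1(X,Z)$, of law $\eta$ since $(p_1)_\#\Psi = \eta$, and $Z_t$ is the parallel transport of $Z_0$ along $X$; moreover $(X_0,Z_0)$ has law $\mu(dx)\psi_x(dz)$ since $(e_0)_\#\Psi = \mu(dx)\psi_x(dz)$. Applying the isometry property curve by curve yields the pointwise, hence $\Psi$-almost sure, identity $|Z_t|_{X_t} = |Z_0|_{X_0}$ for each fixed $t \in [0,1]$. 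Note that unlike the flat setting of Proposition \ref{prop:inte} one does \emph{not} have $Z_t = Z_0$ (the vectors live in different tangent spaces); it is only their norms that are conserved.

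Then I would integrate this identity. For any $t \in [0,1]$,
\be
\int_{TM}|z|_x\, (e_t)_\#\Psi(dx,dz) = \mathbb E\big[|Z_t|_{X_t}\big] = \mathbb E\big[|Z_0|_{X_0}\big] = \int_M\int_{T_xM}|z|_x\, \psi_x(dz)\,\mu(dx),
\ee
and the right-hand side is finite precisely because $\psi \in T_\mu(\mm)$, which by the definition \eqref{defM} is exactly the requirement that this integral converge. Hence $(e_t)_\#\Psi$ has finite first moment, that is $(e_t)_\#\Psi \in \mathcal P_1(TM)$, which is the claim. As in the remark following Proposition \ref{prop:inte}, no integrability is assumed on $\Psi$ a priori and none is needed: the integrability at time $t$ is \emph{produced} by the definition, being transported from time $0$ by the norm-preserving nature of the flow on $TM$.

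The crux of the argument is the isometry property displayed above; once it is in hand the proposition is immediate. The only point demanding care — and where I would expect a referee to look — is measurability: I must check that $(x,z)\mapsto |z|_x$ is Borel on $TM$, which holds because $g$ is smooth, so that both integrals are well defined, and that the almost-sure norm identity passes under the integral sign, which is automatic since it holds pointwise on the support of $\Psi$ and both integrands are nonnegative and measurable.
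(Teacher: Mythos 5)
Your proof is correct and is essentially the paper's own argument: both rest on the single fact that Levi-Civita parallel transport preserves the norm, so that for a process $(X_t,Z_t)_{t\in[0,1]}$ of law $\Psi$ the quantity $|Z_t|_{X_t}$ is almost surely constant in $t$, and the conclusion follows by taking expectations and using the integrability built into $\psi \in T_\mu(\mm)$. The only difference is that you spell out the compatibility computation $\frac{d}{dt}g(\tau,\tau)=2g(\nabla_{\theta'}\tau,\tau)=0$ and the measurability bookkeeping, which the paper leaves implicit.
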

\begin{proof}
Since parallel transport on $M$ preserves the norm, if we consider a process $(X_t,Z_t)_{t \in [0,1]}$ of law $\Psi$, we obtain that almost surely $t \mapsto |Z_t|_{X_t}$ is constant in $[0,1]$. Hence, the result follows by taking the expectation of this quantity.
\end{proof}
\begin{Prop}
Let $µ \in \mm$, $\psi \in T_µ(\mm)$ and $\eta \in \mathcal P (\mathcal C([0,1],M))$, concentrated on $C^1$ curves and such that $(e_0)_\#\eta = µ$. There exists a parallel transport of $\psi$ along $\eta$.
\end{Prop}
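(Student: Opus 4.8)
The plan is to mimic the construction used in the proof of Proposition \ref{prop:exist}, replacing the trivial (translation) parallel transport of $\R^d$ by the Levi-Civita parallel transport $\tau_\theta$ on $M$. First I would glue the two relevant measures along $M$. The measure $(e_0,Id)_\#\eta$ is a coupling, on $M \times \mathcal C([0,1],M)$, of $\mu$ with $\eta$, concentrated on pairs $(\theta(0),\theta)$; the measure $\gamma := \mu(dx)\psi_x(dz)$ is the element $\psi \in T_\mu(\mm)$ viewed as a measure on $TM$ with $(\pi_1)_\#\gamma = \mu$. Since both share the first marginal $\mu$ on the Polish space $M$, the gluing Lemma produces a probability measure $\Gamma$ concentrated on triples $(\theta,x,z)$ with $\theta(0)=x$ and $z \in T_xM$, whose marginal in $\theta$ is $\eta$ and whose marginal in $(x,z)$ is $\gamma$.

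Next I would push $\Gamma$ forward by the parallel-transport map $(\theta,x,z)\mapsto \tau_\theta(z)$, where, for a $C^1$ curve $\theta$ and an initial vector $z \in T_{\theta(0)}M$, $\tau_\theta(z) \in \mathcal C([0,1],TM)$ is the unique parallel transport of $z$ along $\theta$, i.e. the solution of $\nabla_{\theta'(t)} p(t) = 0$ with $p(0)=z$ recalled above. Because $\eta$, and hence $\Gamma$, is concentrated on $C^1$ curves, this is defined $\Gamma$-almost everywhere, and I would set $\Psi := \mathcal T_\#\Gamma$ for this map $\mathcal T$. Verifying the three defining conditions is then immediate: $(p_1)_\#\Psi = \eta$ because the base curve of $\tau_\theta(z)$ is $\theta$; $\Psi$ is by construction concentrated on parallel transports on $M$, which are $C^1$ curves in $TM$ since the parallel transport equation is a linear ODE with continuous coefficients along a $C^1$ curve; and $(e_0)_\#\Psi(dx,dz) = \mu(dx)\psi_x(dz)$ because $\tau_\theta(z)(0) = (\theta(0),z)$, so evaluation at time $0$ returns precisely the marginal $\gamma$ of $\Gamma$.

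The only point that requires genuine care, and which I expect to be the main obstacle, is the measurability of the map $\mathcal T$ from the $C^1$ curves of $\mathcal C([0,1],M) \times TM$ into $\mathcal C([0,1],TM)$, so that the pushforward $\mathcal T_\#\Gamma$ is well defined. In the flat setting of Proposition \ref{prop:exist} this map is the trivial $(\theta,z)\mapsto(t\mapsto(\theta(t),z))$ and measurability is obvious; here $\tau_\theta(z)$ is obtained by solving an ODE whose coefficients depend on $\theta$ through the Christoffel symbols evaluated along $\theta$ and through $\theta'$. Continuous dependence of solutions of linear ODEs on their coefficients and initial data, together with linearity in $z$, yields that $\mathcal T$ is continuous from the $C^1$ topology on curves to the uniform topology on $\mathcal C([0,1],TM)$, hence Borel; since $\Gamma$ is concentrated on $C^1$ curves, this is all that is needed.
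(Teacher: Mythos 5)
Your proposal is correct and follows essentially the same route as the paper: glue $(e_0,Id)_\#\eta$ with $\mu(dx)\psi_x(dz)$ along their common marginal $\mu$, then push the glued measure $\Gamma$ forward by the Levi-Civita parallel-transport map $(\theta,x,z)\mapsto\tau_\theta(z)$, exactly as in the paper's $(\tau_{\pi_3}(\pi_2))_\#\Gamma$. Your additional discussion of the Borel measurability of this map (via continuity in the $C^1$ topology and the fact that $\Gamma$ is concentrated on $C^1$ curves) is a welcome detail that the paper leaves implicit, but it does not constitute a different argument.
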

\begin{proof}
Once again, existence relies on the gluing Lemma, which we use to get the existence of a probability measure $\Gamma(dx,dp,d\theta) \in \mathcal P(M\times TM\times \mathcal C([0,1],M))$ such that $(\pi_3)_\#\Gamma = \eta$, $(\pi_1,\pi_2)_\#\Gamma = µ(dx)\psi_x(dz)$ and $(\pi_1,e_0\circ\pi_3)_\#\Gamma = (Id,Id)_\#µ$. Then, $(\tau_{\pi_3}(\pi_2))_\#\Gamma$ is a required parallel transport.
\end{proof}

\begin{Rem}
In order to use the notion of parallel transport on $M$, we need to restrict ourselves to $\eta \in \mathcal P (\mathcal C([0,1],M))$ being supported on $C^1$ curves. Recall that parallel transport on $M$ cannot be defined along a curve which is simply continuous. Note finally that this regularity shall not guarantee the uniqueness of parallel transport on $\mm$, see Example \ref{ex:nonuniq} which can be adapted to this case for instance.
\end{Rem}

\section{Comparing elements in different tangent spaces and regularity of elements of $\mpo \to \R$}\label{sec:c1a}
One of the main advantages of parallel transport is that it allows to compare elements in different tangent spaces, namely by parallel transporting one into the tangent space of the other, usually along a geodesic. We explain here how such an idea can be used to define regularity classes for functions $U: \mathcal P \to \R$. In particular, we are interested in $\mathcal C^{1,\alpha}$ spaces, that is in the regularity of the differential, which is naturally an element of the tangent space. For the sake of clarity, we restrict ourselves to the case of $\mpt$ here, and discuss extensions at the end of the section. 

We organise our results as follows. We start by introducing a natural notion of distance on the tangent space $T_µ(\mpt)$ for some fixed $µ \in \mpt$ and show how, with the help of parallel transport, we can construct a way of comparing elements in different tangent spaces. We then proceed by giving another natural way of comparing elements in different tangent spaces, by equipping $T\mpt:= \cup_{µ \in \mpt} \{µ\}\times T_µ(\mpt)$ with a distance directly, commenting on the links with the first approach. We then proceed to define regularity spaces for functions $\mpt \to \R$.

The first steps of our program are clearly aligned with basic objects in differential geometry such as metric on both the base space and the tangent bundle (or sheaf here).

\subsection{A distance on the tangent space}
Before comparing two elements in different tangent spaces, we need to be able to do so for elements in the same one. Let $µ \in \mpt$. Recall that there is an equivalence between choosing $\psi \in T_µ(\mpt)$ and choosing $\gamma \in \mathcal{P}(\R^{2d})$ such that $(\pi_1)_\#\gamma = µ$, where $\gamma(dx,dz) = µ(dx)\psi_x(dz)$. This could argue in favor of using a simple Wasserstein distance between $µ(dx)\psi_x(dz)$ and $µ(dx)\psi'_x(dz)$, for $\psi,\psi' \in T_µ(\mpt)$, however this does not work and we refer to the proof of Proposition \ref{prop:comppara} below for a convincing argument. The distance we shall use requires a bit more rigidity. It was already considered by Gigli and it is given by either a probabilistic reformulation or a coupling one.\\

For $\psi,\psi'\in T_µ(\mpt)$, define $d_µ(\psi,\psi')$ by
$$
d^2_µ(\psi,\psi') := \inf_{\gamma}\int_{\R^d\times \R^d\times \R^d}|z-z'|^2\gamma(dx,dz,dz'),
$$
where the infimum is taken over probability measures $\gamma \in \mathcal P(\R^d\times \R^d\times \R^d)$ such that $(\pi_1,\pi_2)_\#\gamma = µ(dx)\psi_x(dz)$ and $(\pi_1,\pi_3)_\#\gamma = µ(dx)\psi'_x(dz)$. Equivalently, this quantity can be defined by 
$$
d^2_µ(\psi,\psi') := \inf_{(X,Z,Z')}\mathbb{E}[|Z-Z'|^2],
$$
where the infimum is taken over triplets of random variables $(X,Z,Z')$ such that $\mathcal L(X,Z)= µ(dx)\psi_x(dz)$ and $\mathcal L(X,Z')= µ(dx)\psi'_x(dz)$.

Note that if $\psi$ and $\psi'$ are deterministic, represented by respectively $\alpha,\alpha' \in L^2_µ(\R^d,\R^d)$, then 
$$
d^2_µ(\psi,\psi') = \|\alpha-\alpha'\|^2_{L^2_µ}.
$$

This distance has already been been introduced in \citep{giglitesi}, and it can also defined a sort of metric on $T_µ\mpt$ through the relation $\langle \psi,\psi'\rangle_µ := d^2_µ(\psi,\delta_0) + d^2_µ(\delta_0,\psi') - d^2_µ(\psi,\psi')$.

\subsection{Comparing elements in different tangent spaces}
Building on this distance, there is a natural way to compare two elements in different tangent spaces $T_µ\mpt$ and $T_\nu\mpt$: parallel transport one along a path connecting $µ$ and $\nu$ and use the previous distance. Note that there is a choice to make, as there are several paths connecting $µ$ and $\nu$ and there are no reason that they should give the same result. Furthermore, there is no uniqueness of the parallel transport either, even if the path is fixed, as we saw earlier. Quite naturally, we focus on geodesics here, but we shall see later that looking at more paths can also be natural. In any case, we are interested in the following quantity.

\begin{Def/Prop}
We note $\mathcal E: T\mpt \times T\mpt \to \R_+$ the map defined by either
\be\label{defE}
\mathcal E((µ,\phi),(\nu,\psi)) = \inf_{\Psi} d^2_\nu(\phi',\psi) = \inf_{\Psi'}d^2_\mu(\phi,\psi'),
\ee
where the first infimum is taken over parallel transports $\Psi$ of $\phi$ along geodesics joining $\mu$ toward $\nu$, the second one over parallel transports $\Psi'$ of $\psi$ along geodesics joining $\nu$ toward $\mu$ and where we use the notations $(e_1)_\#\Psi(dx,dz) = \nu(dx)\phi'_x(dz)$ and $(e_1)_\#\Psi'(dx,dz) = \mu(dx)\psi'_x(dz)$.
\end{Def/Prop}
\begin{proof}
Define 
$$
I_1 :=\inf_{\Psi} d^2_\nu(\phi',\psi), \quad I_2 := \inf_{\Psi'}d^2_\mu(\phi,\psi').
$$
We need to prove $I_1= I_2$. Let $\Psi$ be a parallel transport of $\phi$ along a geodesic $\theta$ joining $µ$ toward $\nu$. Furthermore, consider an optimal coupling between $(e_1)_\#\Psi$ and $\nu(dx)\psi_x(dz)$. By gluing the two together, we are allowed to consider random variables $(X,Z,Y,Z')$ such that $(X,Y)$ is an optimal coupling for $W_2(µ,\nu)$, $\mathcal L(X,Z) = µ(dx)\phi_x(dz), \mathcal L(Y,Z') = \nu(dx)\psi_x(dz)$ and $(Y,Z,Z')$ is optimal for $d^2_\nu(\phi',\psi)$. The cost in $I_1$ associated to $\Psi$ is thus $\mathbb E [|Z-Z'|^2]$. Note that $\mathcal L(((1-t)Y + t X, Z')_{t \in [0,1]})$ provides a parallel transport of $(\nu,\psi)$ along a geodesic joining $\nu$ toward $µ$. Hence, if we call $\Psi'$ this parallel transport, its associated cost in $I_2$ is $d^2_\mu(\phi,\psi') \leq \mathbb E[|Z-Z'|^2]$. Hence, we deduce that $I_2 \leq I_1$. By symmetry, we conclude that $I_1 = I_2$.
\end{proof}
\begin{Rem}
We could also have defined $\mathcal E $ with a supremum over geodesics, and we would have arrived at a somehow similar notion.
\end{Rem}

\subsection{A distance on $T\mathcal P_2(\R^{d})$}
There is no reason that a distance on $T\mpt$ can be derived immediately from $\mathcal E$. This type of phenomena also occurs in differential geometry. We propose here a distance on $T\mpt$ and present links with the quantity $\mathcal E$. Let $µ,\nu \in \mpt$ and $\phi \in T_µ(\mpt), \psi \in T_\nu(\mpt)$. Recall that by disintegration, considering $(µ,\phi)$ is equivalent to considering $\gamma(dx,dz) = µ(dx)\phi_x(dz) \in \mathcal P_2(\R^{2d})$. Thus, we identify the tangent sheaf with $\mathcal P_2(\R^{2d})$. Hence, measuring the distance between $(µ,\phi)$ and $(\nu,\psi)$ can be done by means of the Wasserstein distance on $\mathcal P_2(\R^{2d})$, which yields the definition of our distance:
$$
\mathcal D^2((µ,\phi),(\nu,\psi)) := W_2^2(µ(dx)\phi_x(dz),\nu(dx)\psi_x(dz)).
$$
The standard properties of the Wasserstein distance imply the following.
\begin{Prop}
The function $\mathcal D$ defines a metric on $\mathcal P_2(\R^{2d})$. Furthermore, for any $µ,\nu \in \mpt$ and $\psi \in T_µ(\mpt), \phi \in T_\nu(\mpt)$, $\mathcal D((µ,\phi),(\nu,\psi)) \geq W^2_2(µ,\nu)$.
\end{Prop}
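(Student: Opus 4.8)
The plan is to derive both claims directly from standard properties of the $2$-Wasserstein distance on $\mathcal P_2(\R^{2d})$, using the identification of the tangent sheaf $T\mpt$ with $\mathcal P_2(\R^{2d})$ recalled just above the statement. Throughout, write $\gamma(dx,dz) = \mu(dx)\phi_x(dz)$ and $\gamma'(dx,dz) = \nu(dx)\psi_x(dz)$, so that by definition $\mathcal D((\mu,\phi),(\nu,\psi)) = W_2(\gamma,\gamma')$.

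For the first claim there is nothing to do beyond transporting the metric structure: since $(\mathcal P_2(\R^{2d}),W_2)$ is a genuine complete separable metric space, the map $\mathcal D$ --- being literally $W_2$ read through the identification $(\mu,\phi)\leftrightarrow \gamma$ --- automatically inherits non-negativity, symmetry, the triangle inequality, and the property of vanishing exactly on coincident pairs. I would simply state this.

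For the second claim I would exploit the projection $\pi_1:\R^{2d}\to\R^d$, $\pi_1(x,z)=x$, which is $1$-Lipschitz because $|x-x'|\le |(x,z)-(x',z')|$, and which satisfies $(\pi_1)_\#\gamma = \mu$ and $(\pi_1)_\#\gamma'=\nu$. The key step is the contraction of the Wasserstein distance under a $1$-Lipschitz pushforward: taking an optimal coupling $\Gamma\in\Pi(\gamma,\gamma')$ for $W_2(\gamma,\gamma')$, the measure $(\pi_1\times\pi_1)_\#\Gamma$ is an admissible coupling of $\mu$ and $\nu$, whence
$$
W_2^2(\mu,\nu)\le \int_{\R^{2d}\times\R^{2d}}|\pi_1(w)-\pi_1(w')|^2\,\Gamma(dw,dw')\le \int |w-w'|^2\,\Gamma(dw,dw') = W_2^2(\gamma,\gamma')=\mathcal D^2((\mu,\phi),(\nu,\psi)).
$$
This yields $\mathcal D((\mu,\phi),(\nu,\psi))\ge W_2(\mu,\nu)$.

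There is essentially no serious obstacle: the whole argument rests on textbook facts, the only mild points being the existence of the optimal coupling $\Gamma$ (guaranteed on $\R^{2d}$ by tightness and finite second moments) and the routine verification that its pushforward is a competitor for $W_2(\mu,\nu)$. I would, however, flag that the inequality as displayed in the statement should read $\mathcal D \ge W_2(\mu,\nu)$ rather than $\mathcal D \ge W_2^2(\mu,\nu)$: already the case $\phi=\psi=\delta_0$ gives $\gamma = \mu\otimes\delta_0$, $\gamma'=\nu\otimes\delta_0$ and hence $\mathcal D = W_2(\mu,\nu)$, which violates $\mathcal D\ge W_2^2(\mu,\nu)$ as soon as $W_2(\mu,\nu)>1$.
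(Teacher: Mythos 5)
Your proof is correct and is essentially the paper's own argument: the paper gives no written proof beyond the sentence ``The standard properties of the Wasserstein distance imply the following,'' and the two standard properties it has in mind are exactly the ones you spell out --- the metric axioms transported through the identification of $T\mpt$ with $\mathcal P_2(\R^{2d})$, and the contraction of $W_2$ under the $1$-Lipschitz projection $\pi_1$ applied to an optimal coupling. Your flag about the exponent is also well taken: since $\mathcal D((\mu,\phi),(\nu,\psi)) = W_2\bigl(\mu(dx)\phi_x(dz),\nu(dx)\psi_x(dz)\bigr)$, the correct conclusion is $\mathcal D \geq W_2(\mu,\nu)$ (equivalently $\mathcal D^2 \geq W_2^2(\mu,\nu)$), and the inequality $\mathcal D \geq W_2^2(\mu,\nu)$ as printed is a typo, as your example with $\phi = \psi = \delta_0$ and $W_2(\mu,\nu) > 1$ shows.
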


A bound from above on $\mathcal D^2$ can be obtained as follows.
\begin{Prop}\label{prop:comppara}
Let $µ,\nu \in \mpt$, and $\phi \in T_µ(\mpt), \psi \in T_\nu(\mpt)$. Then
\be\label{eq2}
\mathcal D^2((µ,\phi),(\nu,\psi)) \leq W_2^2(µ,\nu) + \mathcal E((µ,\phi),(\nu,\psi)).
\ee
Furthermore, the inequality can be strict and we do not have in general $\mathcal E \leq \mathcal D^2$.
\end{Prop}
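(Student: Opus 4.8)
The plan is to establish \eqref{eq2} by producing, for each competitor parallel transport appearing in \eqref{defE}, an explicit coupling of the two lifted measures $\mu(dx)\phi_x(dz)$ and $\nu(dx)\psi_x(dz)$ on $\R^{2d}$, and then to exhibit a single elementary example settling the two negative assertions simultaneously.

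First I would fix a parallel transport $\Psi$ of $\phi$ along a geodesic joining $\mu$ toward $\nu$ and reuse the random-variable construction from the proof of \eqref{defE}: realize the geodesic through an optimal coupling $(X,Y)$ for $W_2(\mu,\nu)$, represent $\phi$ by $(X,Z)$ so that $(e_1)_\#\Psi = \mathcal L(Y,Z) = \nu(dx)\phi'_x(dz)$, and glue in a $Z'$ making $(Y,Z,Z')$ optimal for $d^2_\nu(\phi',\psi)$ with $\mathcal L(Y,Z') = \nu(dx)\psi_x(dz)$. The key observation is that the law of the pair $\big((X,Z),(Y,Z')\big)$ is then a coupling of $\mu(dx)\phi_x(dz)$ and $\nu(dx)\psi_x(dz)$; since the squared cost on $\R^{2d}$ splits as $|X-Y|^2 + |Z-Z'|^2$, the definition of $\mathcal D$ together with optimality of $(X,Y)$ gives
$$
\mathcal D^2((\mu,\phi),(\nu,\psi)) \le \mathbb E[|X-Y|^2] + \mathbb E[|Z-Z'|^2] = W_2^2(\mu,\nu) + d^2_\nu(\phi',\psi).
$$
Taking the infimum over $\Psi$ (equivalently over the transported $\phi'$) yields \eqref{eq2}.

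For the strictness and the failure of $\mathcal E \le \mathcal D^2$, I would use one example with $\mu=\nu$, so that the only geodesic is constant, the parallel transport is trivial, $\phi'=\phi$, and hence $\mathcal E = d^2_\mu(\phi,\psi)$ while $W_2^2(\mu,\nu)=0$. Take $d=1$, $\mu=\nu=\frac12(\delta_0+\delta_1)$, and the deterministic fibers $\phi_0=\delta_0,\ \phi_1=\delta_{10}$ and $\psi_0=\delta_{10},\ \psi_1=\delta_0$. Then $d^2_\mu(\phi,\psi)=\frac12|0-10|^2+\frac12|10-0|^2=100$, whereas $\mu(dx)\phi_x(dz)=\frac12(\delta_{(0,0)}+\delta_{(1,10)})$ and $\nu(dx)\psi_x(dz)=\frac12(\delta_{(0,10)}+\delta_{(1,0)})$ are at squared Wasserstein distance $\mathcal D^2 = 1$, achieved by moving mass in the base variable (matching $(0,0)$ with $(1,0)$ and $(1,10)$ with $(0,10)$, each at cost $1$). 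Thus $\mathcal D^2 = 1 < 100 = W_2^2(\mu,\nu)+\mathcal E$, so the inequality is strict, and simultaneously $\mathcal E = 100 > 1 = \mathcal D^2$, so $\mathcal E \le \mathcal D^2$ fails in general.

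The inequality itself is essentially immediate, since the quadruple already assembled for the Definition/Proposition is itself a transport plan between the two lifted measures, so no new construction is needed; I expect the only genuine content to be the example. The decisive point there is that $\mathcal D^2$ is allowed to rearrange the base variable $x$, whereas the fiberwise quantity $d^2_\nu$ (and hence $\mathcal E$) compares $\phi'_x$ with $\psi_x$ over the \emph{same} base point, which can be arbitrarily more expensive when the $z$-profiles are swapped across distant points. This is precisely the ``convincing argument'' alluded to earlier, explaining why a plain Wasserstein distance between $\mu(dx)\phi_x(dz)$ and $\mu(dx)\psi'_x(dz)$ is inadequate as a metric on a single tangent space.
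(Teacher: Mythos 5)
Your proposal is correct and follows essentially the same route as the paper: the inequality is obtained by gluing a parallel transport along a geodesic with an optimal fiber coupling and splitting the squared cost on $\R^{2d}$ into base and fiber parts, and both negative claims are settled by a single example with $\mu=\nu$ and ``swapped'' deterministic fibers. The paper's example is continuous ($\mathcal L(U,\lambda U)$ versus $\mathcal L(1-U,\lambda U)$ for $U$ uniform on $[0,1]$, giving $\mathcal E=\lambda^2/3$ but $\mathcal D^2\leq 1/3$) rather than your two-point one, but the mechanism --- moving mass in the base variable is cheap while the fiberwise comparison over the same base point is expensive --- is identical.
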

\begin{proof}
Consider a parallel transport of $\phi$ along a geodesic between $µ$ and $\nu$. This naturally defines a coupling between $µ(dx)\phi_x(dz)$ and $\nu(dx)\psi_x(dz)$. Hence, the left hand side is smaller than the right hand side in \eqref{eq2}. 

When $(µ,\phi) = \mathcal L((U,\lambda U))$ and $(\mu,\psi) = \mathcal L((1-U,\lambda U))$ for $U$ a uniform random variable on $[0,1]$ (hence $\nu = µ$), we can compute $\mathcal E ((µ,\phi),(µ,\psi)) = \lambda^2/3$ whereas $\mathcal D^2 ((µ,\phi),(µ,\psi)) \leq 1/3$.
\end{proof}
The previous result highlights the fact that there is a choice to be made when comparing different element in $T\mpt$. A point of view purely based on parallel transport should rely on the quantity $\mathcal E$. On the other hand, taking inspiration from differential geometry, $\mathcal D$ seems to behave like the natural geodesic distance on $T\mpt$. Outside of this section, we shall focus on the approach with parallel transport, namely because it is more natural when being interested in the regularity of functions $U : \mpt \to \R$.\\

We do not go into the full details of constructing an equivalent of the Sasaki metric on the tangent sheaf $T\mpt$ (it is a natural metric when we try to look at $T\mpt$ as a Riemannian manifold), as we do not need it for applications. Nonetheless, we briefly comment on it since $\mathcal D$ looks like what the Sasaki distance could be. Building on the structure of $T\mpt$ which can be seen as $\mathcal P(T\R^d)$ with $T\R^d = \R^{2d}$, we know that $T(T\mpt)$ can be seen as $\mathcal P(\R^{4d})$. Furthermore, the normal crossing of the vertical part of the tangent space to $T\mpt$ of the parallel transport can be stated formally quite simply, namely as the tangent part of the parallel transport is constant in this case in flat geometry. The precise decomposition of $T_\gamma T\mpt$ into an horizontal and a vertical part seems to be the most interesting question here. Clearly, by the construction we proposed for parallel transport, it is immediate to check that $(\frac{d}{dt})_\#\Psi$ is concentrated on curves $(x(t),v(t))$ on $T\R^d$ such that $v'(t)$ crosses normally the tangent space $T_{x(t)}\R^d$.\\

\subsection{Reminders: Derivatives of function of probability measures}
Let $U: \mpt \to \R$. The function $U$ is said to be differentiable at $µ \in \mpt$ if there exist a \emph{deterministic} element $\phi \in T_µ(\mpt)$ and a modulus of continuity $\omega$ such that, for any $\nu \in \mpt$, $\gamma \in \Pi(µ,\nu)$
$$
\left|U(\nu)  - U(µ) - \int_{\R^{2d}}(y-x)\cdot\phi(x)\gamma(dx,dy)\right| \leq C_2(\gamma)\omega(C_2(\gamma)),
$$
recall that $C_2(\gamma) := \left( \int_{\R^{2d}}|x-y|^2\gamma(dx,dy)\right)^\frac12$. In such a case we note $D_µ U(µ) = \phi \in L^2((\R^d,µ),\R^d)$.
\begin{Rem}
There are different notions of differentiability on $\mpt$ that can be found in the literature, see chapter 5 in \citep{carmona} for instance. Quite often, in the previous definition, the requirement is only made for coupling $\gamma$ which are optimal for $W_2(µ,\nu)$. In fact, there is an equivalence between the two notions, see Theorem 3.21 in \citep{gangbo}, and we do not know any short proof of this fact. Here, we prefer to stick to the formulation with all couplings, as it is a nice non-Hilbertian formulation of the derivative introduced by P.-L. Lions on the lift of the function. In particular, one can check that we do not use in the following the Hilbertian structure of $L^2(\Omega,\R^d)$.
\end{Rem}
\begin{Rem}
The previous definition of differentiability contains the fact that $D_µ U(µ)$ is a deterministic element of the tangent space. This fact is not obvious. It has been proven by Lions in \citep{lions} by introducing its lift on the space of random variables.
\end{Rem}

\subsection{$\mathcal C^{1,\alpha}$ regularity on $\mpt$}
Let $U: \mpt \to \R$ be a differentiable map at any $µ \in \mpt$. The aim of this section is to define a notion of continuity, H\"older or Lipschitz regularity for the map $µ \to D_µ U(µ)$. Of course, since for every $µ \in \mpt$, $D_µ U(µ) \in L^2((\R^d,µ),\R^d)$, this map is valued in $\cup_{\nu \in \mpt}L^2((\R^d,\nu),\R^d)\subset T\mpt$ and a suitable notion of parallel transport is needed to compare, for $µ\ne \nu$ $D_µ U(µ) \in L^2((\R^d,µ),\R^d)$ and $D_µ U(\nu) \in L^2((\R^d,\nu),\R^d)$. We adopt the following definition, in which we make constant use of the notation $\gamma \in \Pi(µ,\nu)$, of Proposition \ref{prop:analogchemincoupl} and of the notation $\mathcal T^\gamma(\psi)$ to denote the parallel transport of $\psi$ along the coupling $\gamma$.
\begin{Def}
Let $U: \mpt \to \R$ be a differentiable map at any $µ \in \mpt$. We say that it is $\mathcal C^1$ if, for any $µ \in \mpt$, 
$$
\lim_{C_2(\gamma) \to 0}d_\nu(\mathcal T^\gamma(D_µ U(µ)),D_µ U(\nu)) = 0.
$$
For $\alpha \in (0,1)$, we say that this function is in $\mathcal C^{1,\alpha}(\mpt)$ if 
\be\label{defI}
I_{\alpha}(D_µ U):=\sup_{\gamma, 0 <C_2(\gamma) \leq 1}\frac{d_\nu(\mathcal T^\gamma(D_µ U(µ)),D_µ U(\nu)) }{C_2(\gamma)^\alpha} < \infty.
\ee
The space $\mathcal C ^{1,1}(\mpt)$ is defined in a similar way except for the fact that the supremum is taken over all $\gamma$ such that $C_2(\gamma) > 0$.
\end{Def}
\begin{Rem}
Remark that, for $µ,\nu \in \mpt$, $\phi,\psi$ being deterministic elements of respectively $T_µ(\mpt),T_\nu(\mpt)$, and $\gamma \in \Pi(µ,\nu)$,
$$
d^2_\nu(\mathcal T^\gamma(D_µ U(µ)),D_µ U(\nu))  = \int_{\R^d\times \R^d}|\phi(x)-\psi(y)|^2\gamma(dx,dy).
$$
\end{Rem}

This definition is perfectly equivalent to standard $\mathcal C^{1,\alpha}$ regularity of the lift $\tilde U$ of $U$ in $L^2(\Omega,\R^d)$ introduced in Section \ref{sec:lift}, except that this language allows to define such a solution even when the state space is not $\R^d$. We believe it is a strong justification of the regularity of the regularization of $W_2^2$ introduced in \citep{bertucci2024approximation}.

\begin{Rem}
Below we shall comment on the choice of the quantity $C_2(\gamma)^\alpha$ in \eqref{defI}, namely that we could also have used $C_{2\alpha}(\gamma)^\alpha$.
\end{Rem}
The following holds.
\begin{Prop}
For $\alpha \in (0,1]$, $\mathcal C^{1,\alpha}(\mpt)\cap L^\infty(\mpt)$ is a normed vector space when equipped with the norm $\mathcal N$ defined by
$$
\mathcal N(U) := \|U\|_\infty + \sup_{µ \in \mpt}\{d_µ(D_µU(µ),0)\} + I_{\alpha}(D_µU).
$$
\end{Prop}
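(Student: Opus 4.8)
The plan is to verify the three axioms of a normed vector space for $\mathcal N$ on the set $\mathcal C^{1,\alpha}(\mpt)\cap L^\infty(\mpt)$, treating separately the claim that this set is a vector space and the claim that $\mathcal N$ is a norm. First I would check closure under the vector space operations. For scalars $\lambda$ and functions $U,V$ in the space, differentiability gives $D_\mu(\lambda U)(\mu) = \lambda D_\mu U(\mu)$ and $D_\mu(U+V)(\mu) = D_\mu U(\mu) + D_\mu V(\mu)$ at each $\mu$, where the latter uses that the sum of two deterministic elements $\phi,\phi'\in L^2_\mu(\R^d,\R^d)$ is again deterministic, so the derivative remains a legitimate deterministic element of $T_\mu(\mpt)$. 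The $L^\infty$ part is immediate. The key observation, which I would isolate as the crux, is that parallel transport along a coupling is \emph{linear} on deterministic elements: since $\mathcal T^\gamma(\delta_{v(\cdot)})$ is represented by $y\mapsto v$ pushed along $\gamma$ (as in the Remark following the $\mathcal C^{1,\alpha}$ definition, where $d^2_\nu(\mathcal T^\gamma(\phi),\psi) = \int|\phi(x)-\psi(y)|^2\gamma(dx,dy)$), the transport of $\lambda\phi$ equals $\lambda$ times the transport of $\phi$, and the transport of $\phi+\phi'$ equals the sum of the transports, all read off the same $\gamma$.

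Given this linearity, I would establish that $d_\nu$ satisfies a triangle inequality and absolute homogeneity when restricted to transported deterministic elements. Using the Remark's explicit formula, for fixed $\gamma\in\Pi(\mu,\nu)$ the quantity $d_\nu(\mathcal T^\gamma(\phi),\psi)$ is exactly the $L^2(\gamma)$ norm of the function $(x,y)\mapsto \phi(x)-\psi(y)$, so subadditivity in the pair $(\phi,\psi)$ and positive homogeneity follow from the ordinary Minkowski inequality in $L^2(\gamma)$ \emph{before} taking the supremum over $\gamma$. Consequently, for $I_\alpha$ defined in \eqref{defI}, the numerator $d_\nu(\mathcal T^\gamma(D_\mu(U+V)(\mu)),D_\mu(U+V)(\nu))$ is bounded by the sum of the corresponding numerators for $U$ and $V$ at the same $\gamma$; dividing by $C_2(\gamma)^\alpha$ and taking the supremum yields $I_\alpha(D_\bullet(U+V))\le I_\alpha(D_\bullet U)+I_\alpha(D_\bullet V)$, and the homogeneity $I_\alpha(D_\bullet(\lambda U)) = |\lambda|I_\alpha(D_\bullet U)$ is identical. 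The finiteness of $I_\alpha(D_\bullet(U+V))$ then shows $U+V\in\mathcal C^{1,\alpha}$, closing the vector space claim simultaneously with the triangle inequality for the seminormal part of $\mathcal N$.

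It remains to assemble the three terms into a genuine norm. Each term is nonnegative; the supremum term $\sup_\mu d_\mu(D_\mu U(\mu),0)$ and the seminorm $I_\alpha$ are subadditive and absolutely homogeneous by the same $L^2(\gamma)$ argument above (for the supremum term, apply Minkowski to the fixed diagonal coupling, i.e. to $d_\mu$ itself), and $\|\cdot\|_\infty$ is a genuine norm. Adding three subadditive, absolutely homogeneous functionals of which at least one separates points gives a norm, so definiteness of $\mathcal N$ reduces to that of $\|\cdot\|_\infty$: if $\mathcal N(U)=0$ then $\|U\|_\infty=0$, hence $U\equiv 0$. I expect the main obstacle to be purely a matter of bookkeeping rather than depth: one must be careful that the infimum defining $d_\nu$ and the supremum over $\gamma$ interact correctly, so the clean route is to always bound the transported-difference functional \emph{pointwise in $\gamma$} using the explicit $L^2(\gamma)$ representation, apply Minkowski there, and only then pass to the infimum over triple couplings and the supremum over $\gamma$. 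This ordering is what makes the triangle inequality survive both the inner infimum (in $d_\nu$) and the outer supremum (in $I_\alpha$), and it is the one subtlety worth flagging explicitly.
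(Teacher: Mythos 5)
Your verification of the normed-vector-space axioms is correct and is essentially the argument the paper intends, only spelled out in more detail: the paper dispatches homogeneity and the triangle inequality in two sentences by appealing to ``the probabilistic representation of $d^2_\mu$'', which is exactly your device of writing, for deterministic $\phi \in T_\mu(\mpt)$, $\psi \in T_\nu(\mpt)$ and a \emph{fixed} $\gamma \in \Pi(\mu,\nu)$, the quantity $d_\nu(\mathcal T^\gamma(\phi),\psi)$ as the $L^2(\gamma)$ norm of $(x,y)\mapsto \phi(x)-\psi(y)$, applying Minkowski pointwise in $\gamma$, and only then passing to the supremum; your remark that closure of $\mathcal C^{1,\alpha}$ under addition comes for free from subadditivity of $I_\alpha$ likewise fills in a step the paper leaves implicit. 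One caveat on phrasing: the claim that ``the transport of $\phi+\phi'$ equals the sum of the transports'' is not literally meaningful, since $\mathcal T^\gamma(\phi)$ is in general a non-deterministic element of $T_\nu(\mpt)$ and addition of measure-valued maps is not defined; but your actual estimates never use this, only the explicit $L^2(\gamma)$ formula for deterministic inputs (and the derivatives $D_\mu U(\mu)$ are deterministic by Lions' result), so nothing breaks.

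The genuine divergence is that the paper's proof does not stop there: after the brief norm verification it says ``it only remains to verify the completeness'' and devotes the bulk of the proof to showing that a Cauchy sequence $(U_n)$ for $\mathcal N$ converges in the space --- uniform convergence of $U_n$, convergence of the lifted measures $\mu(dx)\delta_{D_\mu U_n(\mu)(x)}(dz)$ in $W_2$ uniformly in $\mu$, identification of the limit object as the derivative of the limit function via the fundamental-theorem-of-calculus identity along couplings, the fact that this limit derivative is again deterministic (Theorem 3.17 in the cited work of Gangbo et al.), and passage to the limit in $I_\alpha$. In other words, the author reads the proposition as asserting a Banach space, and that completeness argument is the mathematical substance of the paper's proof. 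Measured against the literal wording ``normed vector space'' your proposal is complete; measured against what the paper actually proves, it omits the main part, and none of the completeness machinery appears in your writeup.
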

\begin{Rem}
\begin{itemize}
\item The same result holds for $\mathcal C ^{1,0}(\mpt)$ with the convention that $I_0 (D_µU) = 0$ for any $U$.
\item The boundedness of $U$ and $D_µU$ can be relaxed in the usual way: by considering weighted norms which allow for a fixed growth in $W_2(µ,\delta_0)$ (polynomial e.g.).
\end{itemize}
\end{Rem}
\begin{proof}
The probabilistic reformulation of $d_µ$ implies that $I_\alpha$ indeed satisfies $I_\alpha(\lambda D_µ) = |\lambda|I_\alpha(D_µ U)$. Hence, to verify that $\mathcal N$ is a norm, we only need to check the triangular inequality. This does not raise any issue, for instance it follows easily from the probabilistic representation of $d^2_µ$. The fact that $\mathcal N$ is a norm then implies the vector space structure.

Hence, it only remains to verify the completeness. Let $(U_n)_{n\geq 0}$ be a Cauchy sequence for $\mathcal N$. Since it is one for the $\|\cdot\|_\infty$ norm, we know that it converges uniformly toward some function $U$, which is continuous. To prove the convergence of the derivative, consider the map $\phi_n : µ \mapsto µ(dx)\delta_{D_µU_n(µ)(x)}(dz)$, valued in $\mathcal P_2(\R^{2d})$. Remark that for any $µ \in \mpt$, $(\phi_n(µ))_{n \geq 0}$ is a Cauchy sequence. Indeed, since we can write $d_µ(D_µU_n(µ) - D_µU_m(µ),0) = W_2(\phi_n,\phi_m)$. Hence, $(\phi_n(µ))_{n \geq 0}$ converges to some $g(µ) \in \mathcal P_2(\R^{2d})$ with first marginal $µ$. Furthermore, this convergence is uniform in $µ$. We now need to check that $g = D_µU$. Since, for all $n \geq 0$, $U_n$ is $\mathcal C^1$, we obtain that for any $\gamma \in \Pi(µ,\nu)$ for $µ,\nu \in \mpt$,
$$
U_n(\nu) - U_n(µ) = \int_0^1\int_{\R^{2d}}D_µU_n(((1-t)\pi_1+t \pi_2)_\#\gamma)(x)(y-x)\gamma(dx,dy)dt.
$$
Passing to the limit in this expression, and denoting $\gamma(dx,dy) = µ(dx)k_x(dy)$, we deduce that
$$
U(\nu) - U(µ) = \int_0^1\int_{\R^{3d}}z\cdot(y-x)g(((1-t)\pi_1+t \pi_2)_\#\gamma)(dx,dz)k_x(dy)dt.
$$
From this, we deduce that, if we denote $g(µ)(dx,dz) =µ(dx)\psi_{µ,x}(dz)$, then $x \mapsto \int_{\R^d}z\psi_{µ,x}(dz) = D_µU(µ)$. In fact, we can show that $g(µ)$ is in fact of the form $(Id,\phi)_\#µ$ for some map $\phi$, which is then nothing else than $D_µU(µ)$. In this setting, this is a consequence of Theorem 3.17 in \citep{gangbo} (it is even a consequence of a weaker results of Lions in \citep{lions} under our assumptions). Hence, we obtained that $U$ is differentiable everywhere. Passing to the limit in the quotient which defined $I_\alpha(U_n)$, we easily obtained that $I_\alpha(U) < \infty$, hence, the result is proved.

\end{proof}

We also recall Proposition 5.36, vol I in \citep{carmona}, stated with other notation, but which here takes the form.
\begin{Prop}\label{prop:reglip}
Let $U: \mpt \to \R$ be such that $I_1(D_µU) < \infty$. Then, for any $µ$, there exists a $I_1(D_µU)$ Lipschitz function $\phi_µ: \R^d \to \R^d$ such that $D_µ U = \phi_µ$ $µ$ almost everywhere.
\end{Prop}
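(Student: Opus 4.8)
The plan is to reduce the statement to a single self-coupling inequality for the deterministic field $\phi_\mu := D_\mu U(\mu)$ and then run a Lebesgue-differentiation argument to extract a pointwise Lipschitz bound. First I would apply the formula of the Remark preceding the Proposition in the diagonal case $\nu = \mu$. Then $D_\mu U(\nu) = D_\mu U(\mu) = \phi_\mu$, so for every self-coupling $\gamma \in \Pi(\mu,\mu)$ the Remark gives $d^2_\mu(\mathcal T^\gamma(\phi_\mu),\phi_\mu) = \int_{\R^{2d}}|\phi_\mu(x)-\phi_\mu(y)|^2\gamma(dx,dy)$, while the definition of $I_1$ bounds the left-hand side by $I_1(D_\mu U)^2\, C_2(\gamma)^2$. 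Writing $L := I_1(D_\mu U)$, this yields
\[
\int_{\R^{2d}}|\phi_\mu(x)-\phi_\mu(y)|^2\,\gamma(dx,dy)\ \le\ L^2\int_{\R^{2d}}|x-y|^2\,\gamma(dx,dy)\qquad\text{for all }\gamma\in\Pi(\mu,\mu).\tag{$\star$}
\]
Note that $\phi_\mu\in L^2_\mu(\R^d,\R^d)$ by the very definition of the derivative, which will be used below.

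Next I would feed into $(\star)$ a family of couplings that localises the mass transfer between two small balls. Fix $x_0,x_1\in\mathrm{supp}\,\mu$ with $x_0\neq x_1$, and for small $r>0$ let $\mu_i$ be the normalised restriction of $\mu$ to $B(x_i,r)$ and $c:=\min\bigl(\mu(B(x_0,r)),\mu(B(x_1,r))\bigr)>0$. Setting $\rho_r:=\mu-\tfrac{c}{2}(\mu_0+\mu_1)$, one checks $\rho_r\ge 0$ (since $c\le\mu(B(x_i,r))$ forces $\tfrac{c}{2}\mu_i\le\tfrac12\mu|_{B(x_i,r)}$), so
\[
\gamma_r := \tfrac{c}{2}\,\mu_0\otimes\mu_1 + \tfrac{c}{2}\,\mu_1\otimes\mu_0 + (\mathrm{Id},\mathrm{Id})_\#\rho_r
\]
has both marginals equal to $\mu$, i.e. $\gamma_r\in\Pi(\mu,\mu)$. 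The diagonal part contributes nothing to either side of $(\star)$, and by symmetry of the integrand the off-diagonal parts combine, so $(\star)$ reduces to
\[
\int\!\!\int |\phi_\mu(x)-\phi_\mu(y)|^2\,\mu_0(dx)\mu_1(dy)\ \le\ L^2\int\!\!\int |x-y|^2\,\mu_0(dx)\mu_1(dy).
\]

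Then I would let $r\to 0$. The right-hand side converges to $L^2|x_0-x_1|^2$ since $|x-y|\to|x_0-x_1|$ uniformly over the two shrinking balls. For the left-hand side I use that $\mu$-almost every point is an $L^2$-Lebesgue point of $\phi_\mu$ with respect to $\mu$, by the Besicovitch differentiation theorem (valid for an arbitrary Radon measure on $\R^d$, which is essential as $\mu$ need not be absolutely continuous); at such points $\int|\phi_\mu-\phi_\mu(x_i)|^2\,d\mu_i\to0$, and a triangle inequality in $L^2(\mu_0\otimes\mu_1)$ shows the left-hand side tends to $|\phi_\mu(x_0)-\phi_\mu(x_1)|^2$. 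Hence $|\phi_\mu(x_0)-\phi_\mu(x_1)|\le L|x_0-x_1|$ for $\mu\otimes\mu$-almost every pair, so $\phi_\mu$ is $L$-Lipschitz on a set of full $\mu$-measure. Finally I extend it to all of $\R^d$ with the same constant via Kirszbraun's theorem, producing the desired $L$-Lipschitz representative $\phi_\mu$.

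The main obstacle is precisely the generality of $\mu$: because no regularity (absolute continuity, full support, dimension of support) is assumed, the classical Lebesgue differentiation theorem is unavailable and one must invoke Besicovitch, while simultaneously engineering couplings in $\Pi(\mu,\mu)$ that transfer mass between two prescribed regions without disturbing the marginals. The passage to the limit also requires the $L^2$-Lebesgue point property rather than the mere $L^1$ one, which is why recording $\phi_\mu\in L^2_\mu$ at the outset matters. An entirely parallel route is available through Lions' lift of Section \ref{sec:lift}, where $(\star)$ is exactly the statement that $\nabla\tilde U$ is $L$-Lipschitz on $L^2(\Omega,\R^d)$; I prefer the coupling formulation above since it stays within the Wasserstein framework and makes the role of the distance $d_\mu$ transparent.
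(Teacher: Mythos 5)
Your proof is correct, but it does not follow the paper's route, because the paper in fact gives none: it invokes Proposition 5.36, vol.~I of Carmona and Delarue, declines to reproduce ``its rather heavy proof'', and only asks the reader to check that that proof (which is set on Lions' lift $L^2(\Omega,\R^d)$, perturbing a random variable $X\sim\mu$ by measure-preserving swaps of mass between small events and using the Lipschitz continuity of the derivative of the lifted function) does not truly use the Hilbert structure. Your argument is the intrinsic, coupling-level rendition of that scheme, carried out in full: the swap couplings $\gamma_r\in\Pi(\mu,\mu)$ are exactly the laws of such random-variable swaps, your inequality $(\star)$ is the coupling form of the statement that the lifted gradient is $L$-Lipschitz, and Besicovitch differentiation plus Kirszbraun extension replace the corresponding steps on the probability space. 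This buys precisely what the paper only asserts: since the proof never leaves the Wasserstein framework, it is manifestly independent of the Hilbertian structure of $L^2(\Omega,\R^d)$ and transfers verbatim to $\mathcal P(\mathcal O)$ for a smooth bounded domain, which is what the paper ``leaves the reader to check''; what the citation buys is only brevity. Two points of hygiene in your write-up: (i) the use of self-couplings $\nu=\mu$ is legitimate because the $\mathcal C^{1,1}$ definition of $I_1$ takes the supremum over all $\gamma$ with $C_2(\gamma)>0$ and nowhere requires $\nu\neq\mu$ — this deserves an explicit sentence, as it is the hinge of the whole reduction; (ii) the conclusion should not be phrased as a $\mu\otimes\mu$-a.e.\ statement but as: the bound $|\phi_\mu(x_0)-\phi_\mu(x_1)|\le L|x_0-x_1|$ holds for \emph{all} pairs in $E\times E$, where $E\subset\mathrm{supp}\,\mu$ is the single full-$\mu$-measure set of $L^2$-Lebesgue points of $\phi_\mu$; it is this product structure that makes $\phi_\mu|_E$ genuinely $L$-Lipschitz and lets Kirszbraun apply, and your argument does deliver it — the sentence merely needs tightening.
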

We do not reproduce its rather heavy proof and we leave the reader to check that its proof does not use the Hilbert structure of $L^2(\Omega,\R^d)$, and that it can be thus extended instantly to the case of a function $U: \mpo \to \R$ for $\mo$ a smooth domain.

\subsection{Discussion on the $\mathcal C^{1,\alpha}$ norm and its consequences}
There are mainly two points that we want to insist upon in the definition of $I_\alpha$ in \eqref{defI}.\\

 The first one is the fact that the supremum is taken over all couplings and not only over optimal couplings. As we mentioned above, the same choice/problem arises when defining the notion of derivative, and it has be shown in \citep{gangbo} that the two notions are equivalent. Of course, for $I_\alpha$, one implies the other, but it is not clear for us to show an equivalence between the two notions and we leave this question for future research. In particular, the proof of \citep{gangbo} uses in a crucial manner the fact that the term we want to estimate can be seen as a part of a Taylor expansion, which is not true here.\\

The second point we want to insist upon is that, when $\alpha \in (0,1)$ the choice to divide by $C_2(\gamma)^\alpha$ might not be the only natural one. Indeed, one could have also defined 
$$
J_\alpha(D_µU):= \sup_{\gamma, 0 < C_2(\gamma)< 1}\frac{d_\nu(\mathcal T^\gamma (D_µ U(µ)),D_µ U(\nu))}{C_{2\alpha}(\gamma)^\alpha}.
$$
Since, by Jensen's inequality $C_{2\alpha}(\gamma)^\alpha \leq C_2(\gamma)^\alpha$, $J_\alpha(D_µU)\geq I_\alpha(D_µU)$. Furthermore, the proof of Proposition 5.36, vol I in \citep{carmona} can be carried out in a similar manner to obtain the following.
\begin{Prop}
Let $U: \mpt \to \R$ be such that $J_\alpha(D_µU) < \infty$. Then, for any $µ$, there exists a $\mathcal C^\alpha(\R^d,\R^d)$ continuous function $\phi_µ$, such that $\|\phi_µ\|_{\mathcal C^\alpha} \leq J_\alpha(D_µU)$ and $D_µ U = \phi_µ$ $µ$ almost everywhere.
\end{Prop}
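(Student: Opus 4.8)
The plan is to follow the strategy of the proof of Proposition 5.36 in \citep{carmona} that underlies Proposition \ref{prop:reglip}, adapting it to the exponent structure carried by $C_{2\alpha}$. Fix $\mu \in \mpt$ and write $g := D_\mu U(\mu) \in L^2((\R^d,\mu),\R^d)$, which is a deterministic element of $T_\mu(\mpt)$. The starting observation is that it suffices to restrict the supremum defining $J_\alpha$ to self-couplings $\gamma \in \Pi(\mu,\mu)$. For such a $\gamma$ we have $\nu := (\pi_2)_\#\gamma = \mu$, hence $D_\mu U(\nu) = g$, and the remark preceding the statement (with $\phi = \psi = g$) gives
\be
\int_{\R^{2d}}|g(x)-g(y)|^2\gamma(dx,dy) = d^2_\mu(\mathcal T^\gamma(g),g)\leq J_\alpha(D_\mu U)^2\int_{\R^{2d}}|x-y|^{2\alpha}\gamma(dx,dy),
\ee
valid for every $\gamma\in\Pi(\mu,\mu)$ with $0 < C_2(\gamma) < 1$. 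The entire proof consists in extracting from this single family of inequalities a pointwise bound $|g(x_0)-g(x_1)|\leq J_\alpha(D_\mu U)\,|x_0-x_1|^\alpha$ for a well-chosen version of $g$.

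First I would choose couplings that localise the transported mass around two prescribed points. Given $x_0\neq x_1$ in the support of $\mu$, I pick radii $\eps_0,\eps_1>0$ such that the balls $B_i := B(x_i,\eps_i)$ satisfy $\mu(B_0)=\mu(B_1)=:m$, with $m$ as small as needed (for non-atomic points this is achieved by adjusting $\eps_0,\eps_1$ through the monotone maps $r\mapsto \mu(B(x_i,r))$; atoms are handled directly by swapping an arbitrarily small amount of mass). I then take $\gamma$ equal to the identity coupling off $B_0\cup B_1$ and equal, on $B_0\cup B_1$, to the two product pieces $\tfrac1m\,\mu|_{B_0}\otimes\mu|_{B_1}$ and its transpose. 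This is a genuine element of $\Pi(\mu,\mu)$, and since $C_2(\gamma)^2 \leq 2m\sup_{B_0\times B_1}|x-y|^2$, choosing $m$ small enough guarantees $0<C_2(\gamma)<1$ no matter how large $|x_0-x_1|$ is, so the displayed inequality applies.

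The heart of the argument is the passage to the limit $\eps_0,\eps_1\to 0$. On the left-hand side, Jensen's inequality bounds the integral below by $2m\,|\bar g_{\eps_0}(x_0)-\bar g_{\eps_1}(x_1)|^2$, where $\bar g_{\eps_i}(x_i):=\tfrac1m\int_{B_i}g\,d\mu$ denotes the $\mu$-average of $g$ over $B_i$; on the right-hand side I bound $|x-y|^{2\alpha}\leq(\eps_0+\eps_1+|x_0-x_1|)^{2\alpha}$ for $x\in B_0,\,y\in B_1$, giving the factor $2m\,J_\alpha(D_\mu U)^2(\eps_0+\eps_1+|x_0-x_1|)^{2\alpha}$. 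Dividing by $2m$ and invoking the Lebesgue--Besicovitch differentiation theorem for the measure $\mu$ (so that $\bar g_{\eps_i}(x_i)\to g(x_i)$ at $\mu$-almost every $x_i$) yields $|g(x_0)-g(x_1)|\leq J_\alpha(D_\mu U)\,|x_0-x_1|^\alpha$ for $\mu\otimes\mu$-almost every $(x_0,x_1)$. A map obeying this estimate on a set of full $\mu$-measure is uniformly $\alpha$-Hölder there with seminorm $J_\alpha(D_\mu U)$, and a Hölder extension theorem produces $\phi_\mu\in\mathcal C^\alpha(\R^d,\R^d)$, with controlled Hölder seminorm, satisfying $\phi_\mu = g = D_\mu U(\mu)$ $\mu$-almost everywhere.

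I expect the main obstacle to be precisely the construction and limiting analysis of these localising couplings: the requirement that both marginals equal $\mu$ forces the mass-matching condition $\mu(B_0)=\mu(B_1)$, which interacts delicately with the atomic/continuous decomposition of $\mu$ and with the density-point estimates needed to ensure $\bar g_{\eps_i}(x_i)\to g(x_i)$. A secondary, genuinely $\alpha$-dependent subtlety is the final extension step: for the Lipschitz case $\alpha=1$ Kirszbraun's theorem preserves the constant exactly, whereas for $\alpha\in(0,1)$ the source metric $(x,y)\mapsto|x-y|^\alpha$ is a snowflake rather than Hilbertian, so keeping the exact constant $J_\alpha(D_\mu U)$ for a vector-valued extension requires care (e.g.\ a componentwise Whitney--McShane extension). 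The remaining ingredients — the reduction to self-couplings, the Jensen lower bound, and the averaging limit — are comparatively mechanical and mirror the proof of Proposition \ref{prop:reglip}.
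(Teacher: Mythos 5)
Your proof is correct, and it fills in with an actual argument what the paper leaves as a pointer: the paper's ``proof'' is only a deferral to Proposition 5.36 in \citep{carmona}, with the one-line instruction that the Taylor expansion around the point of interest be replaced by a direct bound on the H\"older quotient. Your argument --- swap couplings between shrinking balls, the Jensen lower bound by ball averages, Besicovitch differentiation, then extension --- is the coupling-language rendition of that localisation scheme, and it buys something real: since $J_\alpha$ is \emph{defined} as a supremum over couplings, testing it on explicit self-couplings $\gamma\in\Pi(\mu,\mu)$ hands you the inequality $\int|g(x)-g(y)|^2\gamma(dx,dy)\leq J_\alpha(D_\mu U)^2\int|x-y|^{2\alpha}\gamma(dx,dy)$ for free, with no recourse to the Lions lift and no first-order expansion of $U$ to identify ball averages of the derivative --- which is where most of the ``long and tedious'' work of the cited proof sits. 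In that sense you have carried out exactly the modification the paper gestures at, but more directly than the route it cites.

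Two points in your write-up need repair, though neither is fatal. First, exact mass matching $\mu(B_0)=\mu(B_1)$ cannot always be achieved by tuning radii (the maps $r\mapsto\mu(B(x_i,r))$ can jump), and at an atom the common mass $m$ cannot be made small, which breaks your claim that $C_2(\gamma)<1$ can be arranged ``no matter how large $|x_0-x_1|$ is'' (take $\mu=\frac12(\delta_{x_0}+\delta_{x_1})$ with $|x_0-x_1|$ large: the full swap has $C_2(\gamma)\geq 1$). Both issues vanish if you swap proportional sub-measures: with $\lambda_i:=\frac{m'}{\mu(B_i)}\mu|_{B_i}$ for an arbitrarily small common mass $m'$, the coupling $(Id,Id)_\#(\mu-\lambda_0-\lambda_1)+\frac{1}{m'}\left(\lambda_0\otimes\lambda_1+\lambda_1\otimes\lambda_0\right)$ is admissible, satisfies $C_2(\gamma)^2\leq 2m'\sup_{B_0\times B_1}|x-y|^2$, and the quantities surviving the division by $2m'$ are still the ball averages $\frac{1}{\mu(B_i)}\int_{B_i}g\,d\mu$, so the Besicovitch step is untouched. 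Second, on the extension: componentwise McShane--Whitney only gives the constant $\sqrt{d}\,J_\alpha(D_\mu U)$ for the Euclidean norm; to get the exact constant claimed in the statement, use Kirszbraun when $\alpha=1$, and for $\alpha\in(0,1)$ compose Schoenberg's isometric embedding of the snowflake $(\R^d,|x-y|^\alpha)$ into a Hilbert space with Kirszbraun. With these two standard repairs your proof is complete, and it is also the natural template for extending Proposition \ref{prop:reglip} beyond the Hilbertian setting, as the paper asks the reader to check.
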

\begin{proof}
We do not reproduce this quite long and tedious proof. We mention that following the exact same argument as in \citep{carmona}, we arrive at the required result by, instead of doing the Taylor expansion around the element of interest $z_0 \in \R^d$ (with the notation of \citep{carmona}), we instead bound the quotient which appears in the expression of the $\mathcal C^\alpha$ norm of $D_µU$.
\end{proof}
Furthermore, we were not able to construct a function $U: \mpt \to \R$ such that $J_\alpha(D_µU) = \infty$ while $I_\alpha(D_µ U) < \infty$, also not knowing if such a function exists. We also leave these questions for future research.

\subsection{Extensions to other cases than $\mpt$}
The results and definitions above translate immediately if, instead of working on $\mpt$, we are interested in either $\mpp$ or $\mathcal P(\mo)$ for a smooth bounded domain $\mo \subset \R^d$. The case of probability measures over a smooth manifold requires a bit more care, as we constantly made use of Proposition \ref{prop:analogchemincoupl} in the previous, which of course has no reason to be true in this setting. Thus, when defining the equivalent of $I_\alpha$ in this setting, one has to be careful with the fact that the supremum has to be taken over all Lagrangian path $\eta$ as follows.
$$
\sup_{\eta \in LP(µ,\nu)}\frac{d_\nu(\mathcal T^\eta(D_µU(µ)),D_µU(\nu))}{C_2(\eta)^\alpha},
$$ 
where 
$$
LP(µ,\nu) := \{ \eta \in \mathcal P(\mathcal C([0,1],M)), (e_0)_\#\eta = µ, (e_1)_\#\eta = \nu, \eta \text{ is concentrated on } C^1([0,1],M)\},
$$
and, for $\eta \in LP(µ,\nu)$,
$$
C_2(\eta) := \left(\int_{ \mathcal C([0,1],M)}\int_0^1|y'(t)|_{y(t)}^2dt \eta(dy)\right)^\frac12.
$$
Except for this change, the previous sections can be extended quite straightforwardly.

\section{Translated curves and application in optimal control problems}\label{sec:trans}
In this section, we explain and prove how ideas introduced above yields quite simply regularity of the value function of some optimal control problems in the space of probability measures. Once again, we focus here on the case of $\mpt$. We comment on several extensions at the end of the section. The core idea of this section is to translate curves in $\mpt$. We do not claim we shall be able to translate any curves so we start by explaining in details the curves with which we are going to be interested in. Then, we explain how to translate them, and finally how to use this translations in optimal control problems.\\

Given a curve $(m_t)_{t \in [0,1]}$ in $\mpt$  we would like to translate it to another curve $(\tilde m(t))_{t \in [0,1]}$. By translation, we mean that if the two curves are sufficiently smooth to have time derivative, then, in some sense, $\tilde m'(t) \in T_{\tilde m(t)}$ should be a parallel transport of  $m'(t) \in T_{m(t)}(\mpt)$ along a given path between $m(t)$ and $\tilde m(t)$. More precisely, since we are motivated by control problems, our fundamental objective is in fact the following: given a curve $(m(t))_{t \in [0,1]}$ generated by a control $(\alpha_t)_{t \in [0,1]}$ (typically $\alpha_t \in T_{m(t)}(\mpt)$), can we generate another curve $(\tilde m(t))_{t \in [0,1]}$ based on another control $(\beta_t)_{t \in [0,1]}$ which is given so that for all $t$, $\beta_t$ is a parallel transport of $\alpha_t$ along a certain path between $m(t)$ and $\tilde m(t)$ ?

\subsection{Curves that we want to translate}\label{sec:curves}

We focus first on the construction of a curve given a control. When restricting our attention to controls which are deterministic elements of the tangent space, the question is well understood and we recall some facts about it here.

 Consider a curve $(m_t)_{t \in [0,1]}$ which is a weak solution of
\be\label{ce}
\partial_t m + \text{div}_x (\alpha m) = 0 \text{ in } (0,1)\times \R^d,
\ee
for some measurable vector field $\alpha : [0,1]\times \R^d\to \R^d$. By weak solution, we mean that we can test the previous relation against $\mathcal C^{1,c}([0,1]\times \R^d \to \R^d)$ test functions and obtain the usual relations. Assume that
\be\label{growth}
\int_0^1\|\alpha(s,\cdot)\|_{L^2(m_s)}^2ds < \infty.
\ee
Condition \eqref{growth} is far from sufficient to guarantee uniqueness of solutions $m$ of \eqref{ce}, nonetheless, it is sufficient to obtain absolute continuity of the curve, and thus a precise sense in which we can understand $m'(t) \in T_{m(t)}(\mpt)$. It is somehow summarized in Ambrosio's superposition principle that we recall here.
\begin{Theorem}[Thm 8.2.1 in \citep{ags}]\label{thm:ambrosio}
If $(m_t)_{t \in [0,1]}$ is a curve in $\mpt$ which is a weak solution of \eqref{ce} and if \eqref{growth} is satisfied, then there exists a process $(X_t)_{t \in [0,1]}$ such that $m_t = \mathcal L(X_t)$ for all $t$ and almost surely
\be\label{eds}
\forall t \in [0,1], X_t = X_0 + \int_0^t\alpha(s,X_s)ds.
\ee
Conversely, if $(X_t)_{t \in [0,1]}$ is a solution of \eqref{eds} and $\int_0^1\mathbb{E}[|\alpha(s,X_s)|^2]ds < \infty$, then $(\mathcal{L}(X_t))_{t \in [0,1]}$ is a weak solution of \eqref{ce}.
\end{Theorem}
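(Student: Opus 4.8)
The converse implication is the elementary one. Given a solution $(X_t)$ of \eqref{eds} with $\int_0^1\mathbb E[|\alpha(s,X_s)|^2]ds<\infty$, I would fix a test function $\varphi\in\mathcal C^{1,c}([0,1]\times\R^d)$ and apply the classical chain rule to $t\mapsto\varphi(t,X_t)$, which is licit because $t\mapsto X_t$ is absolutely continuous with $\dot X_t=\alpha(t,X_t)$ for a.e.\ $t$, almost surely. Integrating in $t$, taking the expectation, and using Fubini (justified by the stated integrability together with $\sup_t\mathbb E[|X_t|^2]<\infty$) recovers exactly the weak formulation of \eqref{ce} with $m_t=\mathcal L(X_t)$.

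For the direct (superposition) implication, the plan is a regularization-and-compactness argument. First I would mollify in space, setting $m_t^\eps:=m_t*\rho_\eps$ and defining a regularized velocity by $\alpha^\eps:=((\alpha m_t)*\rho_\eps)/m_t^\eps$, so that $(m_t^\eps,\alpha^\eps)$ still solves \eqref{ce} while $\alpha^\eps$ is now Lipschitz in space for each fixed $t$. By Jensen's inequality the mollified field inherits the energy bound \eqref{growth}, i.e.\ $\int_0^1\|\alpha^\eps(s,\cdot)\|_{L^2(m_s^\eps)}^2ds$ stays bounded uniformly in $\eps$.

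Since $\alpha^\eps$ is regular, the ODE $\dot X=\alpha^\eps(t,X)$ generates a well-defined flow, and pushing $m_0^\eps$ through it produces $\eta^\eps\in\mathcal P(\mathcal C([0,1],\R^d))$ concentrated on its characteristics with $(e_t)_\#\eta^\eps=m_t^\eps$ for all $t$. The uniform energy bound translates into a uniform bound on $\int\int_0^1|\dot\theta(s)|^2ds\,\eta^\eps(d\theta)$, which, via the compact embedding of $H^1$-curves into $\mathcal C([0,1],\R^d)$ and the tightness of the initial marginals $m_0^\eps$, yields tightness of $\{\eta^\eps\}$ in $\mathcal P(\mathcal C([0,1],\R^d))$. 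I would then extract a narrowly convergent subsequence $\eta^\eps\to\eta$ and let $(X_t)$ be the canonical process under $\eta$; the identity $(e_t)_\#\eta=m_t$ passes to the limit since $m_t^\eps\to m_t$.

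The hard part is showing that $\eta$ is concentrated on solutions of the integral equation \eqref{eds}. The obstacle is that $\alpha$ is only square-integrable against $dt\,dm_t$, so $\theta\mapsto\alpha(s,\theta(s))$ is neither continuous nor bounded and cannot be passed to the narrow limit directly. The remedy is to first replace $\alpha$ by a continuous field $\bar\alpha$ with $\|\bar\alpha-\alpha\|_{L^2(dt\,dm_t)}$ small, for which the truncated functional $\theta\mapsto\int_0^1\big(|\theta(t)-\theta(0)-\int_0^t\bar\alpha(s,\theta(s))ds|\wedge 1\big)dt$ is bounded and continuous on $\mathcal C([0,1],\R^d)$ and hence passes to the narrow limit. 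I would then estimate its $\eta^\eps$-average by the genuine ODE residual (which vanishes because $\eta^\eps$ is carried by characteristics of $\alpha^\eps$) plus the discrepancy $\int\int_0^1|\alpha^\eps(s,\theta(s))-\bar\alpha(s,\theta(s))|ds\,\eta^\eps(d\theta)$. The crucial point is that the marginal constraint $(e_s)_\#\eta^\eps=m_s^\eps$ rewrites this discrepancy as the $L^1(dt\,dm_s^\eps)$ norm of $\alpha^\eps-\bar\alpha$, controlled uniformly by the $L^2$ convergence of $\alpha^\eps$ to $\alpha$ and the smallness of $\|\bar\alpha-\alpha\|$. Sending $\eps\to0$ and then refining $\bar\alpha\to\alpha$ forces the limiting functional to vanish under $\eta$, so that $\eta$ is concentrated on solutions of \eqref{eds}, which completes the proof.
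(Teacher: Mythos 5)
The first thing to note is that the paper does not prove this statement at all: it is imported verbatim from the literature, as the bracketed attribution ``Thm 8.2.1 in \citep{ags}'' indicates, and it is only \emph{used} (in Section \ref{sec:trans}, e.g.\ in Proposition \ref{prop:weakambrosio}) as a black box. So there is no in-paper argument to compare yours against; the relevant comparison is with the proof of the superposition principle in Ambrosio--Gigli--Savar\'e itself.

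Measured against that, your sketch is essentially the canonical argument: chain rule plus Fubini for the converse direction, and, for the direct one, mollification of $m_t$ and of the momentum field $\alpha m_t$, construction of path measures $\eta^\eps$ from the regularized flow, tightness from the uniform energy bound, and identification of the limit via truncated residual functionals together with approximation of $\alpha$ by continuous fields, the marginal constraint $(e_s)_\#\eta^\eps = m_s^\eps$ converting path-space integrals into integrals against $m_s^\eps$. This skeleton is correct. Two technical points are overstated, and they are exactly where the real proof has to work: (i) the field $\alpha^\eps = ((\alpha m_t)*\rho_\eps)/m_t^\eps$ is smooth and locally Lipschitz in $x$, but in general neither bounded nor globally Lipschitz (it is only a $\rho_\eps$-weighted average of $\alpha$, and $\alpha$ is merely $L^2(m_t)$), so completeness of the regularized flow is not automatic; one must argue that, for $m_0^\eps$-a.e.\ starting point, the maximal solution is global, which is extracted from the very energy bound you invoke. (ii) The discrepancy term cannot be controlled by ``the $L^2$ convergence of $\alpha^\eps$ to $\alpha$'': these functions live in $L^2$ of different reference measures, so that statement has no direct meaning. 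What is actually used is the convolution contraction estimate $\int|\alpha^\eps - \bar\alpha^{(\eps)}|\,dm_s^\eps \le \int|\alpha - \bar\alpha|\,dm_s$, where $\bar\alpha^{(\eps)}$ denotes the same mollification applied to the continuous field $\bar\alpha$, combined with the local uniform convergence $\bar\alpha^{(\eps)} \to \bar\alpha$. With those two repairs, your proof is the standard one; given that the paper's ``proof'' is a citation, reproducing it is fine but buys nothing beyond self-containedness.
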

Since there is no uniqueness of solutions $m$ of \eqref{ce} under the sole condition \eqref{growth}, this approach does not allow to talk about the evolution $m$ of associated to a control $\alpha$ but simply about an evolution $m$ associated to the control $\alpha$.\\

Since we saw that the parallel transport of a deterministic element of the tangent space may fail to be deterministic, we want to consider an analogous of the previous approach for general elements of the tangent space.\\

This objective puts us into the territory of a quite usual problem in stochastic control: how to generate sufficient noise ? In other words, if we consider general controls $\psi: [0,T]\times \R^d \to \mathcal P(\R^d)$, can we consider a natural associated evolution $(m_t(dx)\psi_{t,x}(dz))_{t \in [0,1]}$ such that
$$
\int_0^1\int_{\R^d\times \R^d}|z|^2\psi_{s,x}(dz)m_s(dx)ds < \infty \quad ?
$$

The answer to this question is no in general. Consider for instance an hypothetical case in which, for all $t \geq 0$, $m_t(dx)\psi_{t,x}(dz) = m_t(dx)\rho(dz)$ for $\rho$ the normal distribution. This would correspond to a situation in which, at any time, we are able to randomize the speed of the particles, independently of their current position, hence of what has been done before. The natural such candidate would be given by a process $(X_t,Z_t)_{t \in [0,1]}$ where $\mathcal L((X_t,Z_t)) = m_t(dx)\rho(dz)$ and $X_t = X_0 + \int_0^tZ_sds$. We do not know how to construct such a process. On the other hand, we can characterize situations which we can represent as the next result shows.
\begin{Prop}\label{prop:weakambrosio}
Let $(X_t,Z_t)_{t \in [0,1]}$ be a $\R^d\times \R^d$ process such that, 
$$
\int_0^1\mathbb E[|Z_t|^2] dt < \infty
$$
 and, almost surely, for all $t \in [0,1]$
\be\label{cond16}
X_t = X_0 + \int_0^tZ_sds.
\ee
Then, there exists a probabilistic space $K \subset \R^N$, a measurable map $b: [0,T]\times \R^d \times K \to \R^d$ and a continuous path $(µ_t)_{t \in [0,1]}$, valued in $\mathcal P(\R^d\times K)$ such that 
\be\label{cond15}
((\pi_1,b(t,\pi_1,\pi_2)_\#µ_t)_{t \in [0,1]} = (\mathcal L(X_t,Z_t)_t)_{t \in [0,1]}
\ee
 and $(µ,b)$ is a weak solution of the continuity equation
\be\label{ceK}
\partial_t µ + \text{div}_x(bµ) = 0 \text{ in } (0,1)\times \R^d\times K,
\ee
with initial condition $µ_0$ such that $(\pi_1)_\#µ_0 = \mathcal L(X_0)$.
Conversely, given a probabilistic set $(K\subset \R^N,\mathcal B,\mathbb P')$ and a solution $(µ,b)$ of \eqref{ceK} satisfying 
$$
\int_0^1\int_{\R^d\times K}|b(t,x,k)|^2µ_t(dx,dk)dt < \infty,
$$
there exists a process $(X_t,Z_t)_{t \in [0,1]}$ such that \eqref{cond15} and \eqref{cond16} hold.
\end{Prop}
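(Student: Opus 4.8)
The plan is to reduce both implications to Ambrosio's superposition principle (Theorem \ref{thm:ambrosio}), applied not on $\R^d$ but on the enlarged space $\R^d\times K\subset\R^{d+N}$. The entire role of the auxiliary variable $k\in K$ is to carry the randomness that is missing from the current position, so that the velocity $Z_t$, which is \emph{not} a function of $X_t$ alone, becomes a deterministic measurable function $b(t,X_t,k)$ of the enlarged state. Crucially, the divergence in \eqref{ceK} acts only on the $x$ variable, so the $K$-component must stay frozen in time; the lifted dynamics are then an honest ODE in $\R^{d+N}$ with drift $(b,0)$, whose divergence is exactly $\text{div}_x(b\,\cdot)$.

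\emph{Forward direction.} First I would observe that $\int_0^1\mathbb{E}[|Z_t|^2]dt<\infty$ together with \eqref{cond16} guarantees that, almost surely, $s\mapsto Z_s$ lies in $L^2([0,1],\R^d)$ and $t\mapsto X_t$ in $\mathcal{C}([0,1],\R^d)$, so that $\omega\mapsto(X_\cdot,Z_\cdot)$ is a random element of the Polish space $S:=\mathcal{C}([0,1],\R^d)\times L^2([0,1],\R^d)$. By the Borel isomorphism theorem I fix a Borel isomorphism $\iota$ of $S$ onto a bounded Borel set $K\subset\R$ (boundedness keeps the label with finite moments) and set $W:=\iota(X_\cdot,Z_\cdot)$, a $K$-valued variable constant in $t$. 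Reading off the velocity through $\iota^{-1}$, I define $b(t,x,k)$ to be the value at time $t$ of the $Z$-component of $\iota^{-1}(k)$; this is jointly measurable, does not in fact depend on $x$, and satisfies $b(t,X_t,W)=Z_t$ almost surely for a.e.\ $t$. Setting $\mu_t:=\mathcal{L}(X_t,W)$ yields continuity in $t$, the initial condition $(\pi_1)_\#\mu_0=\mathcal{L}(X_0)$, and \eqref{cond15}, since $(\pi_1,b(t,\pi_1,\pi_2))_\#\mu_t=\mathcal{L}(X_t,b(t,X_t,W))=\mathcal{L}(X_t,Z_t)$. Finally the lifted process solves $\frac{d}{dt}(X_t,W)=(b(t,X_t,W),0)$ with $\int_0^1\mathbb{E}[|b(s,X_s,W)|^2]ds=\int_0^1\mathbb{E}[|Z_s|^2]ds<\infty$, so the converse part of Theorem \ref{thm:ambrosio}, read on $\R^{d+N}$ with drift $(b,0)$, shows that $(\mu_t)$ is a weak solution of \eqref{ceK}.

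\emph{Converse direction.} This is the direct half of Theorem \ref{thm:ambrosio}. Given $(\mu,b)$ solving \eqref{ceK} with $\int_0^1\int_{\R^d\times K}|b|^2\mu_t\,dt<\infty$, I view it as a continuity equation on $\R^{d+N}$ with drift $\hat b(t,(x,k)):=(b(t,x,k),0)$, whose squared norm has the same finite integral. The superposition principle then furnishes a process $(X_t,K_t)$ with $\mathcal{L}(X_t,K_t)=\mu_t$ and, almost surely, $(X_t,K_t)=(X_0,K_0)+\int_0^t(b(s,X_s,K_s),0)\,ds$. The vanishing drift in the $K$-directions forces $K_t\equiv K_0=:W$, whence $X_t=X_0+\int_0^t b(s,X_s,W)\,ds$; putting $Z_t:=b(t,X_t,W)$ gives \eqref{cond16}, the bound $\int_0^1\mathbb{E}[|Z_t|^2]dt<\infty$, and \eqref{cond15} through the same pushforward identity as above.

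\emph{Main obstacle.} The only genuinely delicate point, on which I would spend care, is the measurable bookkeeping in the forward direction: constructing the label space $K$ and a jointly measurable $b(t,x,k)$ for which $b(t,X_t,W)=Z_t$ holds almost surely for a.e.\ $t$. This rests on the Borel isomorphism theorem and on the measurable recovery of the $Z$-component of $\iota^{-1}(k)$. I would also record that choosing $K$ bounded, together with the finiteness of the second moment of $X_0$ implicit in working on $\mpt$, keeps each $\mu_t$ in $\mathcal{P}_2(\R^{d+N})$, so that Theorem \ref{thm:ambrosio} indeed applies on the product space. Everything else is a routine transcription of the superposition principle.
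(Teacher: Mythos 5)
Your proof is correct and follows the same overall strategy as the paper's: enlarge the state space by a label variable that is frozen in time, so that the velocity becomes a deterministic measurable function $b(t,x,k)$ of the enlarged state, and use Ambrosio's superposition principle on $\R^d\times\R^N$ with drift $(b,0)$; your converse direction is in fact identical to the paper's (superposition on the product space, observe the $k$-component is constant, set $Z_t:=b(t,X_t,k)$). The only genuine difference is the choice of label in the forward direction. The paper takes the label to be the sample point itself: without loss of generality $(\Omega,\mathcal A,\mathbb P)=([0,1]^N,\mathcal B,\mathrm{Leb})$, then $K=\Omega$, $b(t,x,k)=Z_t(k)$, and $\mu_t=\mathcal L(X_t,\omega)$; with this choice $b(t,X_t,k)=Z_t$ holds exactly, for \emph{every} $t$, so \eqref{cond15} is immediate. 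Your label is instead the trajectory $(X_\cdot,Z_\cdot)$ itself, Borel-embedded into a bounded set $K\subset\R$. This works, but it forces you through the technicality you flag yourself: an element of $L^2([0,1],\R^d)$ has no pointwise values, so $b$ must be defined through a jointly measurable version (e.g.\ Lebesgue-point averages of the $Z$-component of $\iota^{-1}(k)$), and then $b(t,X_t,W)=Z_t$ holds almost surely only for a.e.\ $t$; consequently you obtain \eqref{cond15} for a.e.\ $t$ rather than for every $t$. This is harmless for \eqref{cond16} and for the continuity equation \eqref{ceK}, and is arguably within the intended reading of the statement (the process $Z$ only matters up to $dt$-null sets), but the paper's sample-point labeling avoids the issue at no cost, since the process $(t,\omega)\mapsto Z_t(\omega)$ is already jointly measurable. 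In exchange, your construction has the mild advantage that the label space is a bounded subset of $\R$ defined intrinsically from the paths, without reference to the underlying probability space.
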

\begin{proof}
The proof is rather simple. On the first side, we note $(\Omega,\mathcal A,\mathbb P)$ the atomeless probabilistic space onto which the process $(X_t,Z_t)_{t \in [0,1]}$ is defined. Without loss of generality, we can assume that $(\Omega,\mathcal A,\mathbb P)= ([0,1]^N,\mathcal B, Leb)$. Taking $K = \Omega$, $b(t,x,k) = Z_t(k)$ for $(t,x,k) \in [0,1]\times \mo\times K$ and $µ_t = \mathcal L(X_t,Z_t)$ yields the solution.

On the other side, we use Ambrosio's superposition principle (in $\R^d\times \R^N$) to obtain a process $(X_t,k_t)_{t \in [0,1]}$ from $µ$ on a probabilistic space $(\Omega,\mathcal A,\mathbb P)$. Observe that almost surely $k$ is constant. We then define $Z_t(\omega) = b(t,X_t(\omega),k_t(\omega))$ and the result follows.
\end{proof}
\begin{Rem}
Note that the set $K$ is somehow free in the previous statement, as it can be changed into any non-atomic probabilistic space.
\end{Rem}
\begin{Rem}\label{rem:conditionalexpectation}
Clearly, the curves satisfying the assumptions of Theorem \ref{thm:ambrosio} also satisfy the assumptions of Proposition \ref{prop:weakambrosio}. Furthermore, consider $(b,µ,K)$ as in Proposition \ref{prop:weakambrosio}. Applying the more general version of Theorem \ref{thm:ambrosio} to the process given by Proposition \ref{prop:weakambrosio}, we obtain that there exists a measurable $\alpha:[0,T]\times \R^d\to \R^d$ such that $m_t :=(\pi_1)_\#µ_t$ solves \eqref{ce}. In fact,  $\alpha(t,\cdot)$ is given $m_t$ almost everywhere by $\alpha(t,x):= \mathbb{E}[Z_t|X_t =x]$.
\end{Rem}
The interpretation of this reformulation is that the set $K$ is the set used to randomize the speed we associated to particles located at $x \in \R^d$. Ambrosio's superposition principle basically states that we can forget this space if we are only interested in the evolution of the marginal in $x \in \R^d$ of \eqref{ceK}. Since it is not the case here, this generalization shall have some importance.\\

The main role of the previous result is to avoid pathological situation such as the one mentioned before the Proposition.

\subsection{Translating the curves}

Given a $\mathcal C^1$ curve $(x_t)_{t \in [0,1]}$ in $\R^d$, and a point $y_0 \in \R^d$, we can easily understand the translation of $(x_t)_{t \in [0,1]}$ to $y_0$ as the curve $(y_t)_{t \in [0,1]}$ defined by $y_t:= y_0 + \int_0^t\dot x_sds$. We now define a similar notion for curves on $\mpt$ satisfying the assumptions of Proposition \ref{prop:weakambrosio}. Ideally, we would like to say that we simply need, at each time, to parallel transport the "speed" of the curve toward another point. However, this strategy runs into the following difficulty: a proper parallel transport is not simply defined between two measures, but along a path (or a minima a coupling in this case). Thus, we need to treat with care the way we are going to connect our two curves as time passes. We shall do so by not only seeking for the translated curve $(\tilde m_t)_{t \in [0,1]}$ of $( m_t)_{t \in [0,1]}$, but rather by seeking for curves which are couplings between them.

We now propose the following notion of translated curve.
\begin{Def}
For any curve $(m_t)_{t \in [0,1]}$ which can be represented through $b,K,µ$ as in Proposition \ref{prop:weakambrosio}, any $\tilde m_0 \in \mpt$ and coupling $\gamma_0 \in \Pi(m_0,\tilde m_0)$, a coupling curve $(\eta_t)_{t \in [0,1]}$ is a curve in $\mathcal P_2(\R^d\times \R^d\times K)$, weak solution of
$$
\partial_t \eta + \text{div}_x(b \eta) + \text{div}_y(b \eta) = 0 \text{ in } (0,1)\times \R^d\times \R^d\times K,
$$
with initial condition $\eta_0(dx,dy,dk) = \gamma_0(dx,dy)\psi_x(dk)$ where $m_0(dx)\psi_x(dk) := µ_0(dx,dk)$. The curve $((\pi_2)_\#\eta_t)_{t \in [0,1]}$ is called a translation of $(m_t)_{t \in [0,1]}$ along $\gamma_0$. Furthermore, the curve $((\pi_1,\pi_2)_\#\eta_t)_{t \in [0,1]}$ is called the associated translated pair.
\end{Def}
\begin{Rem}
Of course, if $(m_t)_{t \in [0,1]}$ is represented (as it is usually the case), without a probabilistic space $K$, then it can be omitted in the previous. Note however that in such a case, it is still likely that the base space $\R^d$ shall play the role of $K$ for the translation $((\pi_2)_\#\eta_t)_{t \in [0,1]}$.
\end{Rem}

Coupling curves exist in general as the next result shows.
\begin{Prop}\label{Prop:deftrans}
Let $(m_t)_{t \in [0,1]}$ be a curve which can be represented through $b,K,µ$ as in Proposition \ref{prop:weakambrosio}. For any $\gamma_0\in \mathcal{P}(\R^{2d})$ with $(\pi_1)_\#\gamma = m_0$, there exists an associated coupling curve, translating $(m_t)_{t \in [0,1]}$ along $\gamma_0$. If $(\gamma_t)_{t \in [0,1]}$ is the translated pair, then $C_2(\gamma_t)$ does not depend on time. More precisely, if $(X_t,Y_t)_{t \in [0,1]}$ is a process of law $(\gamma_t)_{t \in [0,1]}$. Then, almost surely, $X_t-Y_t$ is constant in time.

\end{Prop}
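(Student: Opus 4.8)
The plan is to build the coupling curve at the level of processes and then read off the continuity equation from Ambrosio's superposition principle, the key structural fact being that the $x$ and $y$ coordinates are driven by the \emph{same} velocity field $b(t,x,k)$, so that their difference cannot move.

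First I would use the representation $(b,K,\mu)$ together with Theorem \ref{thm:ambrosio} (applied on $\R^d\times K$, exactly as in the proof of Proposition \ref{prop:weakambrosio} and Remark \ref{rem:conditionalexpectation}) to produce a process $(X_t,k_t)_{t\in[0,1]}$ with $\mathcal L(X_t,k_t)=\mu_t$, satisfying $X_t = X_0 + \int_0^t b(s,X_s,k_s)\,ds$ and $k_t\equiv k_0$ (the $k$-component carries no velocity). Writing $\mu_0(dx,dk)=m_0(dx)\psi_x(dk)$, I would then enlarge the probability space and, by the gluing lemma (Lemma 5.3.2 in \citep{ags}), adjoin a random variable $Y_0$ so that $(X_0,Y_0,k_0)$ has law $\eta_0(dx,dy,dk)=\gamma_0(dx,dy)\psi_x(dk)$; concretely, conditionally on $X_0$ the variables $Y_0$ and $k_0$ are drawn independently from the disintegrations of $\gamma_0$ and $\mu_0$. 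Finally I set $Y_t := Y_0 + (X_t - X_0) = Y_0 + \int_0^t b(s,X_s,k_s)\,ds$.

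Next I would check that $\eta_t := \mathcal L(X_t,Y_t,k_t)$ is the desired coupling curve. By construction the triple $(X_t,Y_t,k_t)$ solves the ODE system with velocity field $V(t,x,y,k) = (b(t,x,k),\,b(t,x,k),\,0)$, which is a measurable function of the state $(x,y,k)$ (the dependence of the $y$-velocity on $x$ rather than on $y$ is harmless). The square-integrability $\int_0^1 \mathbb E[\,|V(t,X_t,Y_t,k_t)|^2\,]\,dt = 2\int_0^1\int |b(t,x,k)|^2\mu_t(dx,dk)\,dt < \infty$ is inherited from the representation of $(m_t)$. Hence the converse direction of Theorem \ref{thm:ambrosio}, now on $\R^d\times\R^d\times K$, shows that $(\eta_t)$ is a weak solution of $\partial_t\eta + \mathrm{div}_x(b\eta) + \mathrm{div}_y(b\eta) = 0$; the initial condition $\eta_0 = \gamma_0(dx,dy)\psi_x(dk)$ holds by the choice of $Y_0$.

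The constancy claim is then immediate: since both coordinates move with the same speed, $\tfrac{d}{dt}(X_t - Y_t) = b(t,X_t,k_t) - b(t,X_t,k_t) = 0$, so $X_t - Y_t = X_0 - Y_0$ almost surely, for every $t$. Consequently the translated pair $\gamma_t = (\pi_1,\pi_2)_\#\eta_t = \mathcal L(X_t,Y_t)$ satisfies $C_2(\gamma_t)^2 = \mathbb E[\,|X_t-Y_t|^2\,] = \mathbb E[\,|X_0-Y_0|^2\,]$, independent of $t$. I expect the only genuine difficulty to be the gluing step: one must adjoin $Y_0$ to the already-constructed process $(X_t,k_t)$ while respecting the conditional-independence structure of $\eta_0$, i.e. keeping $Y_0$ conditionally independent of $k_0$ (hence of the whole future trajectory) given $X_0$. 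Everything else reduces to a direct application of the superposition principle.
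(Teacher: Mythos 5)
Your proposal is correct and follows essentially the same route as the paper's proof: use the gluing lemma to place the superposition process $(X_t,k)$ and an initial variable $Y_0$ with $(X_0,Y_0)\sim\gamma_0$ on a common space, define $Y_t := Y_0 + \int_0^t b(s,X_s,k)\,ds$, and set $\eta_t := \mathcal L(X_t,Y_t,k)$, the constancy of $X_t-Y_t$ being immediate since both coordinates share the same drift. The only difference is that you spell out the verification steps (the conditional-independence form of the gluing giving $\eta_0=\gamma_0(dx,dy)\psi_x(dk)$, and the converse superposition principle yielding the continuity equation) which the paper compresses into ``the rest follows easily.''
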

\begin{proof}
By the gluing Lemma, we can consider both a process $(X_t(\omega),k(\omega))_{t \in[0,1]}$ of marginal law $(µ_t)_{t \in [0,1]}$ on a probabilistic space $(\Omega,\mathcal A, \mathbb P)$ such that, almost surely, $X_t= \int_0^tb(s,X_s,k)ds + X_0$, and a random variable $Y_0$ such that $(X_0,Y_0)\sim\gamma_0$. Define the process $Y_t = Y_0 + \int_0^tb(s,X_s,k)ds$. Setting $\eta_t := \mathcal L(X_t,Y_t,k)$ completes the proof. The rest follows easily.
\end{proof}

Note that there is no uniqueness of such a procedure as the next example shows. This is mainly due to similar reasons as the non-uniqueness of parallel transport on $\mpt$.

\begin{Ex}
For all $t \in [0,1]$, define $m_t = \frac12(\delta_t + \delta_{-t})$. Consider $\gamma_0(dx,dy) = \frac12\delta_0(dx)(\delta_{-2}(dy) + \delta_2(dy))$. Remark that $(m_t)_{t \in [0,1]}$ is a solution of \eqref{ce} with $\alpha(t,x) = 2\mathbb{1}_{x \geq 0} -1$. The three following trajectories are all translations of $(m_t)_{t \in [0,1]}$ along $\gamma_0$
$$
\eta^1_t = \frac12(\delta_{-2-t} + \delta_{2 + t});\quad 
 \eta^2_t = \frac12 (\delta_{-2+t} + \delta_{2 - t});\quad 
\eta^3_t= \frac 14(\delta_{-2-t} + \delta_{2 + t} + \delta_{-2+t} + \delta_{2 - t}).
$$

\end{Ex}

As in the gluing Lemma, in certain "deterministic" situations, we are able to obtain uniqueness of the translation in the sense that all possible translations yields the same translated curve.
\begin{Prop}
Assume that $(m_t)_{t \in [0,1]}$ is a weak solution of \eqref{ce} for some $\alpha \in L^1((0,1),W^{1,\infty}(\R^d))$ and consider $\tilde m_0 \in \mpt$, $\gamma_0\in \Pi(m_0,\tilde m_0)$. Then, there is a unique translating coupling of $(m_t)_{t \in [0,1]}$ along $\gamma_0$.
\end{Prop}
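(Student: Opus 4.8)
The plan is to use the regularity of $\alpha$ to collapse every coupling curve onto the unique flow of a Lipschitz ODE. Since $\alpha \in L^1((0,1),W^{1,\infty}(\R^d))$, the Carath\'eodory version of the Cauchy--Lipschitz theorem produces a unique flow $\phi:[0,1]\times \R^d \to \R^d$ with $\partial_t \phi(t,x) = \alpha(t,\phi(t,x))$ and $\phi(0,x)=x$; in particular the weak solution of \eqref{ce} issued from $m_0$ is forced to be $m_t = \phi(t,\cdot)_\#m_0$. Because here $\alpha$ is given explicitly and is regular, I would take the deterministic representation in the sense of Proposition \ref{prop:weakambrosio}, namely $b=\alpha$ with a trivial auxiliary space $K$. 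With this choice $\varrho_t = m_t$, the disintegration $\psi_x$ is a Dirac mass, and the prescribed initial datum of any coupling curve reduces to $\eta_0 = \gamma_0$.

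With $b=\alpha$, a coupling curve is then a weak solution on $\R^{2d}$ of $\partial_t \eta + \text{div}_x(\alpha\eta) + \text{div}_y(\alpha\eta)=0$ with $\eta_0=\gamma_0$. The driving field $V(t,x,y):=(\alpha(t,x),\alpha(t,x))$ is Lipschitz in $(x,y)$, being Lipschitz in $x$ and constant in $y$, with an $L^1$-in-time Lipschitz constant, so the characteristic system $\dot x = \alpha(t,x),\ \dot y = \alpha(t,x)$ admits a unique flow, explicitly $\Phi(t,x,y) = (\phi(t,x),\, y+\phi(t,x)-x)$. The key step is to apply Ambrosio's superposition principle (Theorem \ref{thm:ambrosio}) to this doubled equation: any solution $(\eta_t)$ is the law of a process following the characteristics of $V$, and since those characteristics are unique by Cauchy--Lipschitz, the representing measure on path space must be $\Phi(\cdot,\cdot,\cdot)_\#\gamma_0$. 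Hence $\eta_t = \Phi(t,\cdot,\cdot)_\#\gamma_0$ is the unique coupling curve, which proves uniqueness of the translating coupling.

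Reading this off, if $(X_t,Y_t)$ has law $((\pi_1,\pi_2)_\#\eta_t)_t$, one recovers the picture of Proposition \ref{Prop:deftrans}: $X_t=\phi(t,X_0)$, the difference $Y_t-X_t=Y_0-X_0$ is constant, and the translated curve is the law of $Y_0+\phi(t,X_0)-X_0$ with $(X_0,Y_0)\sim\gamma_0$, which depends only on $\alpha$ and $\gamma_0$. The main obstacle is precisely the uniqueness of the doubled continuity equation, and this is where the hypothesis $\alpha\in W^{1,\infty}$ in space is essential: it forces unique characteristics and hence a single superposition measure, ruling out the multiplicity displayed in the preceding example, exactly as the regularity of $f$ does in Proposition \ref{prop:uniqpara}. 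The only subtlety to watch is that one genuinely uses the field $\alpha$ itself (through $b=\alpha$), not merely the regularity of the curve $(m_t)$; the weaker $L^1$ integrability in time is harmless and is absorbed by the Carath\'eodory form of Cauchy--Lipschitz.
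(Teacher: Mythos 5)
Your proposal is correct and is essentially the paper's own argument: the paper's entire proof is the one-line remark that, by standard Cauchy--Lipschitz theory, the flow of the ODE is uniquely defined so ``there is no room for any non-uniqueness,'' and your doubled continuity equation with field $V(t,x,y)=(\alpha(t,x),\alpha(t,x))$ and explicit flow $\Phi(t,x,y)=(\phi(t,x),\,y+\phi(t,x)-x)$ is exactly the detailed version of that remark. One caveat: Theorem \ref{thm:ambrosio} as stated requires the $L^2$-in-time condition \eqref{growth}, which $\alpha\in L^1((0,1),W^{1,\infty}(\R^d))$ does not guarantee (it only gives an $L^1$-in-time sup-norm bound), so to rigorously collapse an arbitrary weak solution of the doubled equation onto its characteristics you should invoke either the $L^1$ version of the superposition principle or the classical duality/comparison uniqueness theorem for continuity equations with $L^1\big((0,1),W^{1,\infty}(\R^d)\big)$ velocity fields, both of which hold under exactly the proposition's hypotheses.
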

\begin{proof}
The result is trivial, since, by standard Cauchy Lipschitz theory, the flow of the ODE is uniquely defined, there is thus no room for any non-uniqueness.
\end{proof}
\begin{Rem}
Note that we have here only an assumption on the regularity of the vector field, compared to the "or" type of assumption of Proposition \ref{prop:uniqpara}. This is due to the time dependent nature of the present problem. We leave as a simple exercise to the interested reader to establish that deterministic initial coupling is not sufficient, if the velocity field is not smooth, for instance by considering the case of the curve $(m_t)_{t \in [0,1]}$ given by $m_t = \frac 12(\delta_{f(t)} + \delta_{-f(t)})$ where $f(t) = \max(-\frac 12 -t; 0; -\frac 12 + t)$.
\end{Rem}

This translation enjoys several other properties.

\begin{Prop}
The following hold.
\begin{enumerate}[(i)]
\item Let $(x_t)_{ t \in [0,1]}$ be a $\mathcal C^1$ curve on $\R^d$, $\gamma(dx,dy) = \delta_{x_0}(dx)\delta_{y_0}(dy)$ for some $y_0 \in \R^d$. Then the translation of $(\delta_{x_t})_{t \in [0,1]}$ along $\gamma$ is given by $(\delta_{y_t})_{t \in [0,1]}$. 
\item Let $(m_t)_{t \in [0,1]}$ be a curve as in Proposition \ref{Prop:deftrans} and $\gamma$ some optimal coupling for $W_2$ whose first marginal is $m_0$. Let $\tilde m_t$ be a translation of $(m_t)_{t \in [0,1]}$ along $\gamma$. Then $t \mapsto W_2(m_t,\tilde m_t)$ is bounded by $C_2(\gamma)$.
\end{enumerate}
\end{Prop}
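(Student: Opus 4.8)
The plan is to read both statements off the explicit process-level construction already used in the proof of Proposition \ref{Prop:deftrans}, whose only substantive output is that the paired process $(X_t,Y_t)_{t\in[0,1]}$ obtained by gluing satisfies $X_t-Y_t = X_0-Y_0$ almost surely for every $t$ (both components are driven by the same velocity $b(t,X_t,k)$). Both claims are then essentially corollaries of this rigidity, so I do not expect a genuine obstacle; the only care needed is to match the abstract translation with the elementary one in (i) and to track the correct marginals in (ii).

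For (i), I would first represent the Dirac curve $(\delta_{x_t})_{t\in[0,1]}$ in the form of Proposition \ref{prop:weakambrosio}, taking a trivial space $K$, the deterministic process $X_t=x_t$, and velocity $b(s,x,k)=\dot x_s$; the integrability $\int_0^1|\dot x_s|^2\,ds<\infty$ holds since the curve is $\mathcal C^1$ on a compact interval. Because $\gamma(dx,dy)=\delta_{x_0}(dx)\delta_{y_0}(dy)$ is deterministic, the glued initial datum forces $Y_0=y_0$ almost surely, and the defining relation $Y_t=Y_0+\int_0^t b(s,X_s,k)\,ds$ gives $Y_t=y_0+\int_0^t\dot x_s\,ds$, which is exactly the elementary translated curve $y_t$ introduced at the start of the subsection. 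Hence $\eta_t=\delta_{(x_t,y_t)}$ and the translation $((\pi_2)_\#\eta_t)_{t\in[0,1]}=(\delta_{y_t})_{t\in[0,1]}$, as claimed.

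For (ii), I would invoke the translated pair $(\gamma_t)_{t\in[0,1]}$ furnished by Proposition \ref{Prop:deftrans} together with a process $(X_t,Y_t)_{t\in[0,1]}$ of law $(\gamma_t)$, for which $\mathcal L(X_t)=(\pi_1)_\#\eta_t=m_t$, $\mathcal L(Y_t)=(\pi_2)_\#\eta_t=\tilde m_t$, and, by that same proposition, $X_t-Y_t$ is constant in time. Since $(X_t,Y_t)$ is then an admissible coupling of $m_t$ and $\tilde m_t$, the definition of $W_2$ gives $W_2^2(m_t,\tilde m_t)\le\mathbb E[|X_t-Y_t|^2]=\mathbb E[|X_0-Y_0|^2]$. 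As $(X_0,Y_0)\sim\gamma$, the right-hand side equals $\int_{\R^{2d}}|x-y|^2\gamma(dx,dy)=C_2(\gamma)^2$, yielding $W_2(m_t,\tilde m_t)\le C_2(\gamma)$ for every $t$.

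Finally, I would remark that the inequality in (ii) in fact holds for an arbitrary coupling $\gamma\in\Pi(m_0,\tilde m_0)$, not only optimal ones; optimality is used solely to identify $C_2(\gamma)$ with $W_2(m_0,\tilde m_0)$, so that the bound acquires its natural reading that the translation does not increase the $W_2$-distance beyond its initial value. The one point deserving an explicit line is the admissibility of $(X_t,Y_t)$ as a coupling, namely that its marginals are precisely $m_t$ and $\tilde m_t$; this is immediate from $(\pi_1)_\#\eta_t=(\pi_1)_\#\mu_t=m_t$ and from the very definition of the translated curve as $((\pi_2)_\#\eta_t)_{t\in[0,1]}$.
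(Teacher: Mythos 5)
Your part (ii) is correct and coincides with the paper's own proof: both arguments reduce the claim to the constancy in time of $X_t-Y_t$ asserted in Proposition \ref{Prop:deftrans}, so that for each $t$ the translated pair is an admissible coupling of $m_t$ and $\tilde m_t$ with cost exactly $C_2(\gamma)$, whence $W_2(m_t,\tilde m_t)\le C_2(\gamma)$. Your closing remark that optimality of $\gamma$ plays no role in the inequality itself is also accurate. The only point to phrase carefully is that the constancy statement must be applied to the translated pair of the coupling curve defining the \emph{given} translation $\tilde m_t$, not to a freshly constructed one; since Proposition \ref{Prop:deftrans} is stated for the translated pair of an arbitrary coupling curve, this is legitimate.

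For part (i), however, there is a genuine gap. What you prove is that $(\delta_{y_t})_{t\in[0,1]}$ is \emph{a} translation of $(\delta_{x_t})_{t\in[0,1]}$ along $\gamma$: you run the gluing construction from the proof of Proposition \ref{Prop:deftrans} for one particular representation (trivial $K$, $b(s,x,k)=\dot x_s$) and identify the second marginal of the resulting coupling curve. But the statement asserts that \emph{the} translation is $(\delta_{y_t})$, and in this framework the definite article carries the actual content: a translation is, by definition, the second marginal of \emph{any} weak solution of the coupling continuity equation, and the example given immediately after Proposition \ref{Prop:deftrans} shows that translations are non-unique in general. The paper's proof of (i) consists precisely of the remark you omit, namely that in this deterministic situation there is a unique possible translated curve. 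The gap is easy to close: with your representation the velocity $\alpha(s,x)=\dot x_s$ is constant in $x$, hence lies in $L^1((0,1),W^{1,\infty}(\R^d,\R^d))$, so the uniqueness proposition stated just before the present one applies and the translating coupling is unique; combined with your explicit construction, this identifies the unique translation as $(\delta_{y_t})$. As written, though, your argument does not exclude that another weak solution of the coupling equation produces a different translated curve.
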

\begin{proof}
For the first point, it suffices to remark that there is a unique possible translated curve. The second point follows directly from point $(v)$ in Proposition \ref{Prop:deftrans}.
\end{proof}
Note that there are cases in which the $W_2$ distance between a curve and its translation actually decreases before increasing again, as the previous example showed. A less pathological example is the following.
\begin{Ex}
Let $m_t := \frac12(\delta_{-4+3t} + \delta_{4-3t})$. It is clearly a solution of \eqref{ce} for some smooth $\alpha$, constant in time, such that $\alpha (x) = -3$ if $x \geq 1$ and $\alpha(x) = 3$ if $x \leq -1$. Let $\gamma(dx,dy) = \frac12(\delta_{-4}(dx)\delta_{-1}(dy) + \delta_4(dx)\delta_1(dy))$. It is clearly an optimal coupling for $W_2$ and one can check that $\tilde m_t := \frac12(\delta_{-1 + 3t} + \delta_{1-3t})$ is the unique translation of $(m_t)_{t \in [0,1]}$ along $\gamma$. Then 
$$
W^2_2(m_t,\tilde m_t) = \begin{cases} 9 \text{ if } t \leq \frac 13,\\ (5 - 6t)^2 \text{ else.} \end{cases}
$$
\end{Ex}
This last example is yet another illustration (if it was still needed at this point) that the geometry of $(\mpt,W_2)$ is not flat at all...

\subsection{Applications in optimal control}

Consider $T > 0$, $L: [0,T]\times \R^d\times \R^d \times \mpt \to \R_+$, $G : \mpt \to \R_+$ and the optimal control problem defined for all $t \in [0,T],\bar m\in \mpt$
$$
V(t,\bar m) := \inf_{(\alpha,m)} \left\{\int_t^T \int_{\R^{d}}L(s,x,\alpha(s,x),m_s)m_s(dx)ds + G(m_T)\right\},
$$
where the infimum is taken over all pairs $(\alpha,m)$ such that $m: [t,T] \mapsto \mpt$ is continuous, $\alpha: [0,T]\times\R^d \to \mathcal P(\R^d)$ is measurable and they solve \eqref{ce} with $m_t= \bar m$. The function $V$ is called the value function of the problem. Typically, some growth condition is assumed on $L$ in its third argument, so that $\alpha$ has to exhibit certain bounds. Quite often, those bounds are that for almost every $s \in [t,T]$, $\alpha \in L^2((\R^d,m_s),\R^d)$. 

The interpretation of such a problem is that one can control the measure $m$ through the continuity equation by choosing the speed $\alpha$. This interpretation has to be manipulated with care since, because no particular regularity condition on $\alpha$ has been imposed, the continuity is not necessary uniquely solvable, thus the presence of $m$ in the infimum as well (since, given an $\alpha$, multiple $m$ can be solutions). We refer to \citep{bertuccibook} for more details on this kind of problem.\\

In view of \citep{bertucci2024stochastic} and the notion of parallel transport above, it may seem arbitrary to restrict ourselves to $\alpha$ which are deterministic elements of the tangent space. Hence it is also natural to consider the following problem
\be\label{defUK}
U(t,m) :=  \inf_{(b,µ,K)} \left\{\int_t^T \int_{\R^{d}\times K}L(s,x,b(s,x,k),(\pi_1)_\#µ_s)µ_s(dx,dk)ds + G(m_T)\right\},
\ee
where the infimum is taken over all $(b,µ,K)$ such that \eqref{ceK} is satisfied and such that $(\pi_1)_\#µ_0 = m$.

We always have $U \leq V$, and when $L$ is convex in its third argument, we also have that $V= U$ because of Jensen's inequality and Remark \ref{rem:conditionalexpectation}. We focus in the rest of this section on $U$ as we believe it is a more natural object, for essentially all the arguments we gave when choosing our tangent space above. Our main result on $U$ is the following.
\begin{Theorem}\label{thm:lipschitz}
Assume that there exists $C > 0$ such that $L$ and $G$ satisfy
$$
\ba
&\|G\|_\infty \leq C, \quad \forall µ,\nu, \quad |G(m)-G(m')| \leq CW_2(m,m'),\\
&\forall s,x,z,m, \quad C^{-1}|z|^2 - C \leq L(s,x,z,m) ,\\
&\forall s,x,y,z,m,m', \quad |L(s,x,z,m) - L(s,x,z,m')| \leq C(1 + |z|)(|x-y| + W_2(m,m'))
\ea
$$
Then, there exists $C_0$ depending only on $C$ and $T$ such that for all $t\in [0,T],m,m' \in \mpt$,
$$
|U(t,m) - U(t,m')| \leq C_0W_2(m,m').
$$
\end{Theorem}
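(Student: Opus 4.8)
The plan is to prove the Lipschitz estimate by a direct comparison argument using the translated curves machinery developed in Section~\ref{sec:trans}. Fix $t \in [0,T]$ and two measures $m, m' \in \mpt$. The strategy is to take a near-optimal control $(b,\varrho,K)$ for $U(t,m)$, producing a curve $(m_s)_{s \in [t,T]}$ with $m_t = m$, and then to \emph{translate} this entire curve so that it becomes an admissible competitor for $U(t,m')$ with a controlled cost. Since $U$ is an infimum, it suffices to produce, for each $\varepsilon > 0$, an admissible control for the problem started at $m'$ whose total cost exceeds $U(t,m)$ by at most $C_0 W_2(m,m') + \varepsilon$; swapping the roles of $m$ and $m'$ then yields the two-sided bound.

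\textbf{Construction of the competitor.}

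First I would fix an optimal coupling $\gamma_0 \in \Pi(m,m')$ for $W_2(m,m')$, so that $C_2(\gamma_0) = W_2(m,m')$. By Proposition~\ref{Prop:deftrans}, there exists a coupling curve $(\eta_s)_{s \in [t,T]}$ translating $(m_s)$ along $\gamma_0$; writing $(\gamma_s)_{s \in [t,T]}$ for the associated translated pair, the key structural fact is that if $(X_s,Y_s)$ has law $\gamma_s$, then $X_s - Y_s$ is constant in $s$. Consequently $W_2(m_s, \tilde m_s) \leq C_2(\gamma_s) = C_2(\gamma_0) = W_2(m,m')$ for every $s$, where $\tilde m_s := (\pi_2)_\#\eta_s$ is the translated curve. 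The translated curve is generated by the same vector field $b$ transported along the coupling, so it is itself of the form required by Proposition~\ref{prop:weakambrosio} (with the translated velocity $\tilde b$), hence admissible in the infimum defining $U(t,m')$. Crucially, the translation preserves the velocity: almost surely $\dot Y_s = \dot X_s = b(s,X_s,k)$, so the speed appearing in the running cost is unchanged, only its base point is shifted by $Y_s - X_s$.

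\textbf{Comparing the costs.}

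Now I would estimate the difference of the two costs term by term using the hypotheses on $L$ and $G$. The terminal difference is immediate: $|G(m_T) - G(\tilde m_T)| \leq C\,W_2(m_T,\tilde m_T) \leq C\,W_2(m,m')$. For the running cost, the integrand for the translated problem is $L(s, Y_s, \dot Y_s, \tilde m_s)$ evaluated at the shifted position and marginal but with the \emph{same} velocity $\dot Y_s = b(s,X_s,k)$. Applying the third hypothesis with $x = X_s$, $y = Y_s$, $m = m_s$, $m' = \tilde m_s$ gives the pointwise bound
$$
|L(s,X_s,\dot X_s,m_s) - L(s,Y_s,\dot Y_s,\tilde m_s)| \leq C(1 + |\dot X_s|)\bigl(|X_s - Y_s| + W_2(m_s,\tilde m_s)\bigr).
$$
Integrating in $s$ and taking expectations, and using $|X_s - Y_s| = |X_t - Y_t|$ together with $W_2(m_s,\tilde m_s) \leq W_2(m,m')$, the right-hand side is controlled by
$$
2C\,W_2(m,m')\int_t^T \mathbb{E}[1 + |\dot X_s|]\,ds \leq 2C\,W_2(m,m')\Bigl(T + \sqrt{T}\bigl(\textstyle\int_t^T \mathbb{E}[|\dot X_s|^2]\,ds\bigr)^{1/2}\Bigr),
$$
by Cauchy--Schwarz. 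The remaining task is to bound the velocity term $\int_t^T \mathbb{E}[|\dot X_s|^2]\,ds$ uniformly, which is where the coercivity lower bound $C^{-1}|z|^2 - C \leq L$ enters: for a near-optimal control the total cost is at most $U(t,m) + 1 \leq \|G\|_\infty + 1 + \text{(running cost)}$, and combining this with coercivity yields an a priori bound $\int_t^T \mathbb{E}[|\dot X_s|^2]\,ds \leq C'$ for a constant $C'$ depending only on $C$ and $T$. Assembling these estimates gives $U(t,m') \leq U(t,m) + C_0 W_2(m,m') + \varepsilon$ with $C_0$ depending only on $C$ and $T$.

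\textbf{Main obstacle.}

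The step I expect to require the most care is the a priori bound on the velocity energy $\int_t^T \mathbb{E}[|\dot X_s|^2]\,ds$ from the coercivity of $L$. One must argue that restricting to near-optimal controls is harmless: a control with enormous velocity energy would already have running cost exceeding any fixed competitor (e.g.\ a constant curve), so it can be excluded up front, after which coercivity converts the cost bound into an energy bound independent of the specific endpoints $m,m'$. A secondary subtlety is verifying that the translated curve $(\tilde m_s)$ is genuinely admissible in the class defining $U(t,m')$ --- that is, that the translated velocity field fits the representation of Proposition~\ref{prop:weakambrosio} with a possibly enlarged randomization space $K$; this follows from the construction in Proposition~\ref{Prop:deftrans}, where the same probabilistic space carrying $(X_s,k)$ also carries $Y_s$, so $K$ may simply be augmented to record the initial shift $Y_t - X_t$.
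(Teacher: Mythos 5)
Your proposal is correct and follows essentially the same route as the paper's own proof: fix an optimal coupling for $W_2(m,m')$, take an $\varepsilon$-optimal control $(b,\varrho,K)$ for $U(t,m)$, translate the curve along the coupling via Proposition~\ref{Prop:deftrans} (so that $X_s - Y_s$ stays constant), compare costs using the Lipschitz hypotheses on $G$ and $L$, and close the estimate with Cauchy--Schwarz plus the coercivity bound on $\int_t^T \mathbb{E}[|b(s,X_s,k)|^2]\,ds$. Your explicit discussion of the a priori energy bound and of the admissibility of the translated competitor simply spells out steps the paper treats more tersely.
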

\begin{proof}
Let $t \in [0,T], m,m' \in \mpt$ and $\gamma$ optimal for $W_2(m,m')$. The proof consists in taking an almost optimal curve for $U(t,m)$ and translating it along $\gamma$. Using this translation for $U(t,m')$ shall then yield the result.\\

Note first that $U(t,m)$ is indeed well defined since we can simply choose the pair $(b,µ,K)$ such that $K$ is trivial, $b \equiv 0$ and $µ_s(dx) = m(dx)$. Let $\eps > 0$ and an $\eps$ optimal control $(b,µ,K)$ for $U(t,µ)$. Consider a translating coupling $\eta$ of $(b,µ,K)$ along $\gamma$. Note that $m_s = (\pi_1)_\#\eta_s$ and define $m'_s= (\pi_2)_\#\eta_s$. By $\eps$ optimality in $U(t,m)$ and admissibility in $U(t,m')$, we obtain the following bound.
$$
\ba
U&(t,m') - U(t,m) \leq  G(\tilde m_T) - G(m_T) + \eps +\\
&+\int_t^T\int_{\R^d\times \R^d\times K}L(s,y,b(s,x,k),m'_s) - L(s,x,b(s,x,k),m_s)\eta_s(dx,dy,dk) ds\\
\leq& \int_t^T\int_{\R^d\times \R^d\times K}C(1 +|b(s,x,k)|)(|x-y| + W_2( m_s,m'_s)) \eta_s(dx,dy,dk)ds\\
 &+ CW_2( m'_T,m_T) + \eps.
\ea
$$ 
Remark now that because of the growth condition on $L$, both $$I:=\int_t^T \int_{\R^d\times \R^d\times K}|b(s,x,k)|^2\eta_s(dx,dy,dk)ds <\infty$$ and $I \leq C( C+U(t,µ))$. Thus, using Cauchy-Schwarz inequality, and recalling that for any $s \in [t,T], W^2_2( m_s,m'_s) \leq \mathbb E [|X_s-Y_s|^2] = \mathbb E [|X_t-Y_t|^2] = W_2^2(m,m')$, we obtain the required result.
\end{proof}
The previous result was already known, and it was namely proven by controllability argument \citep{bertucci2024stochastic}. This proof makes transparent the link with standard proofs in finite dimensional cases, see for instance \citep{fleming}. 

\begin{Rem}
Our definition of $U$ is a sort of randomization of the controls allowed in $V$, it can be analyzed in regards of what is called relaxed controls, or mixed strategies in game theory. It could have been formulated in terms of random variables, similarly to problem of optimal control of Mc-Kean Vlasov dynamics, namely by defining $U$ as
$$
U(t,m) := \inf_{(X,Z)} \left\{ \int_t^T\mathbb E[L(s,X_s,Z_s,\mathcal L(X_s))]ds + G(\mathcal L(X_T))\right\},
$$ 
where the infimum is taken over $(X_s,Z_s)_{s \in [t,T]}$ such that $\mathcal L(X_t) = m$ and, almost surely, $X_s = X_t + \int_t^s Z_udu$. Thanks to Proposition \ref{prop:weakambrosio}, the two definitions are equivalent. We preferred the definition \eqref{defUK} as it makes transparent the fact that $K$ is chosen by the controller. In the case of two players extensions of such problems, this has an importance, as the players can both randomize their strategies, but not on the same probabilistic space \citep{bertuccitoappear}.
\end{Rem}

\subsection{Extensions}
We comment on several extensions of the previous method, namely to highlight that even if our formulation with stochastic processes is helpful, it does not rely by any mean on the Hilbertian structure of $L^2(\Omega, \R^d)$.

\subsubsection{More general controls than directly the speed}
In several practical situations, the control $\alpha$ does not affect directly the continuity equation as in \eqref{ce}, but rather intervene through another function $\beta$ as in
$$
\partial_t m + \text{div}_x(\beta(x,\alpha)m)= 0 \text{ in } (0,1)\times \R^d.
$$
This does not play a strong role in the previous. Using our method our translation on the process $(X_s,Z_s)$ where this time $z$ does not represent the speed directly but rather the control, we are still able to transport the control to another initial condition. Note that such a transport is then not the one of parallel transport on $T\mpt$, but rather a similar transport on a sheaf (see Section \ref{sec:sheaf}) which is not the tangent one. Under standard smoothness assumption on $\beta$ (globally Lipschitz for instance), we are then able to recover similar estimates.

\subsubsection{Other integrability conditions than $\mpt$}
The restriction to $2nd$ order moments plays no fundamental role in the previous, and similar results could be obtained for $\mathcal P_p(\R^d)$ with $p \ne 2$.

\subsubsection{Other cases than $\mpt$}
Cases in which we are interested in probability measures over a subdomain of $\R^d$ works exactly in the same way, formally, except for the technical treatment of the boundary of the domain.

When dealing with a smooth manifold, a similar approach can also yield some results, although several difficulties may appear. In this case, the construction of translated curves is more involved, and we need to use the appropriate notion of parallel transport on the manifold. We briefly explain the main challenge at the level of stochastic processes, restricting also our attention to optimal couplings for the sake of clarity. Consider thus a smooth bounded manifold $M$ of dimension $d$, a $M$-valued stochastic process $(X_t)_{t \in [0,1]}$ and a $\R^d$ valued process $(Z_t)_{t \in [0,1]}$ which is interpreted as the being in $T_{X_t}M$ for almost every $t, \omega$ and such that, almost surely,
\[
\frac{dX_t}{dt} = Z_t.
\]
Let $Y_0$ be a $M$-valued random variable, and consider $\eta \in \mathcal P(C([0,1],M))$, concentrated on geodesics of $M$, such that $(e_0)_\#\eta = \mathcal L(X_0)$ and $(e_1)_\#\eta = \mathcal L(Y_0)$. A translation of $(\mathcal L(X_t))_{(t \in [0,1]}$ along $\eta$ should then naturally be given by $(Y_t)_{t \in [0,1]}$, defined as being almost surely a solution of
\be\label{last}
\frac{dY_t}{dt} = \mathcal T^{\vartheta_t}(Z_t),
\ee
with initial condition $Y_0$, where $\mathcal T^{\theta}(z)$ denotes the parallel transport of $z$ along the path $\theta$ and $(\vartheta_t)_{t \in [0,1]}$ is random process taking value in the set of geodesics of $M$, such that almost surely, for almost every $t\in [0,1]$, $\vartheta_t(\omega)$ connects $X_t(\omega)$ to $Y_t(\omega)$. The main issue is that, there is no canonical way to choose $\vartheta$, even with the knowledge of $\eta$, contrary to the flat case. The possible multiplicity of geodesics then implies that the parallel transport of $Z_t(\omega)$ along these geodesics is not a Lipschitz function of $Y_t(\omega)$, which thus makes more challenging the resolution of \eqref{last}.

We believe there are two main ways to tackle this problem. The first one is to prove existence of weaker notion of solution to \eqref{last}, just as Theorem \ref{thm:ambrosio} provides a probabilistic solution to a singular ODE. The second one is to restrict our attention to cases in which $X_t(\omega)$ and $Y_t(\omega)$ are sufficiently close. Note that to prove Lipschitz estimates as in Theorem \ref{thm:lipschitz}, we are mainly interested in $µ,\nu$ such that $W_2(µ,\nu)$ is small, and, as is the distance gets smaller and smaller, the probability that a unique geodesic connects $X_t(\omega)$ and $Y_t(\omega)$ get closer to one as the points are getting closer to one another with larger and larger probability.

In any case, we leave this question for future research.

\section*{Conclusions}
We have presented another definition of the tangent space $T_µ(\mpt)$ than the one which is usually considered. It allowed us sufficient flexibility to tackle the different problems we were interested in. It seems that the only price to pay for such a generalization is the loss of uniqueness of several concepts (parallel transports or translated curves namely). We provided examples of non-uniqueness which highlight the fact that this non-uniqueness arise more from the nature of the space of probability measure than by something that we would have missed when defining $T_µ(\mpt)$.

Several times, we made an crucial use of working at the level of $\mathcal P(\mathcal C([0,1],\R^d))$ rather than at the one of $\mathcal C([0,1],\mathcal P(\R^d))$, therefore adopting a Lagrangian point of view rather than an Eulerian one. This strategy, which is aligned with the lift on the space of random variables, seems to be quite fruitful. Without trying to diminish the obvious advantages of the Eulerian formulation (conciseness, numerical methods,...), we believe that for theoretical purposes, the Lagrangian formulation is quite often more helpful.

\section*{Acknowledgments}
The author is thankful to Luigi Ambrosio, Averil Aussedat, Jean-Michel Lasry and Pierre-Louis Lions for helpful discussions and/or comments on the manuscript. The author also acknowledges a partial support from the Chair FDD (ILB) and the Lagrange Mathematics and Computing Research Center. This work was partially funded by the ERC project PaDiESeM.

\bibliographystyle{plainnat}
\bibliography{bibtan}

\end{document}